		\newcounter{theorem_c} 
		\theoremstyle{plain} 
		\newtheorem{theorem}[theorem_c]{Theorem}
		\newtheorem{lemma}[theorem_c]{Lemma}
		\newtheoremstyle{exampstyle}
		  {2mm} 
		  {2mm} 
		  {\itshape} 
		  {} 
		  {\bfseries} 
		  {.} 
		  {.5em} 
		  {} 
		\theoremstyle{exampstyle}
		\newtheorem{definition}[theorem_c]{Definition}
		\newtheorem*{remark*}{Remark}
	\newcommand{\emptyArg}{\,\underline{\hspace{6px}}\,} 
	\newcommand{\naturals}{\mathbb{N}} 
	\newcommand{\reals}{\mathbb{R}} 
	\newcommand{\complexs}{\mathbb{C}} 
	\newcommand{\integersMod}[1]{\mathbb{Z}_{#1}} 
		\newcommand{\ket}[1]{\vert #1 \rangle} 
		\newcommand{\bra}[1]{\langle #1 \vert} 
		\newcommand{\braket}[2]{\langle #1 \vert #2 \rangle} 
		\newcommand{\Trace}[1]{\operatorname{Tr}\,#1} 
		\newcommand{\decohSym}{\operatorname{dec}} 
		\newcommand{\decoh}[1]{\decohSym_{#1}} 
		\newcommand{\isom}{\cong} 
		\newcommand{\id}[1]{id_{#1}} 
		\newcommand{\Automs}[2]{\operatorname{Aut}_{\,#1}\left[#2\right]} 
		\newcommand{\RMatCategory}[1]{#1\operatorname{-Mat}} 
		\newcommand{\fHilbCategory}{\operatorname{fHilb}} 
		\newcommand{\CategoryC}{\mathcal{C}}
		\newcommand{\CategoryD}{\mathcal{D}}
		\newcommand{\obj}[1]{\operatorname{obj} \, #1} 
		\newcommand{\DoubledCategory}[1]{\operatorname{Double}\left[#1\right]}
	\newcommand{\hbox{\input{symbols/XdotSym.tex}}\!}{\hbox{\input{symbols/XdotSym.tex}}\!} 
	\newcommand{\hbox{\input{symbols/timemultSym.tex}}\!}{\hbox{\input{symbols/timemultSym.tex}}\!} 
	\newcommand{\hbox{\input{symbols/timecomultSym.tex}}\!}{\hbox{\input{symbols/timecomultSym.tex}}\!} 
	\newcommand{\hbox{\input{symbols/timeunitSym.tex}}\!}{\hbox{\input{symbols/timeunitSym.tex}}\!} 
	\newcommand{\hbox{\input{symbols/timecounitSym.tex}}\!}{\hbox{\input{symbols/timecounitSym.tex}}\!} 
	\newcommand{\hbox{\input{symbols/antipodeSym.tex}}\!}{\hbox{\input{symbols/antipodeSym.tex}}\!} 
	\newcommand{\hbox{\input{symbols/ZdotSym.tex}}\!}{\hbox{\input{symbols/ZdotSym.tex}}\!} 
	\newcommand{\hbox{\input{symbols/timematchSym.tex}}\!}{\hbox{\input{symbols/timematchSym.tex}}\!} 
	\newcommand{\hbox{\input{symbols/timediagSym.tex}}\!}{\hbox{\input{symbols/timediagSym.tex}}\!} 
	\newcommand{\hbox{\input{symbols/timematchunitSym.tex}}\!}{\hbox{\input{symbols/timematchunitSym.tex}}\!} 
	\newcommand{\hbox{\input{symbols/trivialcharSym.tex}}\!}{\hbox{\input{symbols/trivialcharSym.tex}}\!} 
	\newcommand{\hbox{\input{symbols/XaltdotSym.tex}}\!}{\hbox{\input{symbols/XaltdotSym.tex}}\!}
	\newcommand{\hbox{\input{symbols/internaltimemultSym.tex}}\!}{\hbox{\input{symbols/internaltimemultSym.tex}}\!} 
	\newcommand{\hbox{\input{symbols/internaltimecomultSym.tex}}\!}{\hbox{\input{symbols/internaltimecomultSym.tex}}\!} 
	\newcommand{\hbox{\input{symbols/internaltimeunitSym.tex}}\!}{\hbox{\input{symbols/internaltimeunitSym.tex}}\!} 
	\newcommand{\hbox{\input{symbols/internaltimecounitSym.tex}}\!}{\hbox{\input{symbols/internaltimecounitSym.tex}}\!} 
	\newcommand{\hbox{\input{symbols/ZaltdotSym.tex}}\!}{\hbox{\input{symbols/ZaltdotSym.tex}}\!}
	\newcommand{\hbox{\input{symbols/internaltimematchSym.tex}}\!}{\hbox{\input{symbols/internaltimematchSym.tex}}\!} 
	\newcommand{\hbox{\input{symbols/internaltimediagSym.tex}}\!}{\hbox{\input{symbols/internaltimediagSym.tex}}\!} 
	\newcommand{\hbox{\input{symbols/internaltimematchunitSym.tex}}\!}{\hbox{\input{symbols/internaltimematchunitSym.tex}}\!} 
	\newcommand{\hbox{\input{symbols/internaltrivialcharSym.tex}}\!}{\hbox{\input{symbols/internaltrivialcharSym.tex}}\!} 
	\newcommand{\hbox{\input{symbols/DdotSym.tex}}\!}{\hbox{\input{symbols/DdotSym.tex}}\!} 
	\newcommand{\hbox{\input{symbols/DcomultSym.tex}}\!}{\hbox{\input{symbols/DcomultSym.tex}}\!} 
	\newcommand{\hbox{\input{symbols/DmultSym.tex}}\!}{\hbox{\input{symbols/DmultSym.tex}}\!} 
	\newcommand{\hbox{\input{symbols/DcounitSym.tex}}\!}{\hbox{\input{symbols/DcounitSym.tex}}\!} 
	\newcommand{\hbox{\input{symbols/DunitSym.tex}}\!}{\hbox{\input{symbols/DunitSym.tex}}\!} 
	\newcommand{\Xbwcolour}{black!80}
	\newcommand{\hbox{\input{symbols/DdotSym.tex}}\!\!}{\hbox{\input{symbols/DdotSym.tex}}\!\!} 
	\newcommand{\!\hbox{\input{symbols/DcomultSym.tex}}\!\!}{\!\hbox{\input{symbols/DcomultSym.tex}}\!\!} 
	\newcommand{\!\hbox{\input{symbols/DmultSym.tex}}\!\!}{\!\hbox{\input{symbols/DmultSym.tex}}\!\!} 
	\newcommand{\!\hbox{\input{symbols/DcounitSym.tex}}\!\!}{\!\hbox{\input{symbols/DcounitSym.tex}}\!\!} 
	\newcommand{\!\hbox{\input{symbols/DunitSym.tex}}\!\!}{\!\hbox{\input{symbols/DunitSym.tex}}\!\!} 
	\newcommand{\!\hbox{\input{symbols/DrightcounitLawSym.tex}}\!\!}{\!\hbox{\input{symbols/DrightcounitLawSym.tex}}\!\!} 
	\newcommand{\!\hbox{\input{symbols/DspecialtyLawSym.tex}}\!\!}{\!\hbox{\input{symbols/DspecialtyLawSym.tex}}\!\!} 
	\newcommand{\Zbwcolour}{white}
	\newcommand{\hbox{\begin{tikzpicture} [scale=1.2,transform shape] 

\def\deltax{0.3} 
\def\deltay{0.5} 


\node [dot, fill=\Zbwcolour] (mult) at (0,0) {};

\end{tikzpicture}
}\!\!}{\hbox{\begin{tikzpicture} [scale=1.2,transform shape] 

\def\deltax{0.3} 
\def\deltay{0.5} 


\node [dot, fill=\Zbwcolour] (mult) at (0,0) {};

\end{tikzpicture}
}\!\!} 
	\newcommand{\!\hbox{\begin{tikzpicture} [scale=1.2,transform shape,rotate=0] 

\def\deltax{0.3} 
\def\deltay{0.5} 


\node (mult_label_outl) at (-\deltax,+\deltay) {};
\node (mult_label_outr) at (+\deltax,+\deltay) {};
\node [dot, fill=\Zbwcolour] (mult) at (0,0) {};
\node (mult_label_in) at (0,-\deltay) {};
\draw[-] [in=270,out=135] (mult) to (mult_label_outl);
\draw[-] [in=270,out=45] (mult) to (mult_label_outr);
\draw[-] (mult_label_in) to (mult);

\end{tikzpicture}
}\!\!}{\!\hbox{\begin{tikzpicture} [scale=1.2,transform shape,rotate=0] 

\def\deltax{0.3} 
\def\deltay{0.5} 


\node (mult_label_outl) at (-\deltax,+\deltay) {};
\node (mult_label_outr) at (+\deltax,+\deltay) {};
\node [dot, fill=\Zbwcolour] (mult) at (0,0) {};
\node (mult_label_in) at (0,-\deltay) {};
\draw[-] [in=270,out=135] (mult) to (mult_label_outl);
\draw[-] [in=270,out=45] (mult) to (mult_label_outr);
\draw[-] (mult_label_in) to (mult);

\end{tikzpicture}
}\!\!} 
	\newcommand{\!\hbox{\input{symbols/ZbwmultSym.tex}}\!\!}{\!\hbox{\input{symbols/ZbwmultSym.tex}}\!\!} 
	\newcommand{\!\hbox{\input{symbols/ZbwcounitSym.tex}}\!\!}{\!\hbox{\input{symbols/ZbwcounitSym.tex}}\!\!} 
	\newcommand{\!\hbox{\input{symbols/ZbwunitSym.tex}}\!\!}{\!\hbox{\input{symbols/ZbwunitSym.tex}}\!\!} 
	\newcommand{\hbox{\input{symbols/ZbwleftDecohSym.tex}}\!}{\hbox{\input{symbols/ZbwleftDecohSym.tex}}\!} 
	\newcommand{\hbox{\input{symbols/ZbwrightDecohSym.tex}}\!}{\hbox{\input{symbols/ZbwrightDecohSym.tex}}\!} 
	\newcommand{\!\hbox{\input{symbols/ZbwunitSqNormSym.tex}}\!\!}{\!\hbox{\input{symbols/ZbwunitSqNormSym.tex}}\!\!} 
	\newcommand{\Ybwcolour}{black!15}
	\newcommand{\hbox{\input{symbols/YbwdotSym.tex}}\!\!}{\hbox{\input{symbols/YbwdotSym.tex}}\!\!} 
	\newcommand{\!\hbox{\input{symbols/YbwcomultSym.tex}}\!\!}{\!\hbox{\input{symbols/YbwcomultSym.tex}}\!\!} 
	\newcommand{\!\hbox{\input{symbols/YbwmultSym.tex}}\!\!}{\!\hbox{\input{symbols/YbwmultSym.tex}}\!\!} 
	\newcommand{\!\hbox{\input{symbols/YbwcounitSym.tex}}\!\!}{\!\hbox{\input{symbols/YbwcounitSym.tex}}\!\!} 
	\newcommand{\!\hbox{\input{symbols/YbwunitSym.tex}}\!\!}{\!\hbox{\input{symbols/YbwunitSym.tex}}\!\!} 
	\newcommand{\Wbwcolour}{black!50}
	\newcommand{\hbox{\input{symbols/WbwdotSym.tex}}\!}{\hbox{\input{symbols/WbwdotSym.tex}}\!} 
	\newcommand{\hbox{\input{symbols/WbwcomultSym.tex}}\!}{\hbox{\input{symbols/WbwcomultSym.tex}}\!} 
	\newcommand{\hbox{\input{symbols/WbwmultSym.tex}}\!}{\hbox{\input{symbols/WbwmultSym.tex}}\!} 
	\newcommand{\hbox{\input{symbols/WbwcounitSym.tex}}\!}{\hbox{\input{symbols/WbwcounitSym.tex}}\!} 
	\newcommand{\hbox{\input{symbols/WbwunitSym.tex}}\!}{\hbox{\input{symbols/WbwunitSym.tex}}\!} 
	\newcommand{\hbox{\begin{tikzpicture} [scale=1.2,transform shape, rotate = 0] 

\def\deltax{0.3} 
\def\deltay{0.5} 

\path[use as bounding box] (-\deltax,-0.7*\deltay) rectangle (\deltax,0.3*\deltay);

\node (mult) at (0,0.3*\deltay) [upground,scale=0.5] {};
\node (mult_label_in) at (0,-0.7*\deltay) {};
\draw[-] (mult_label_in) to (mult);

\end{tikzpicture}
}\!}{\hbox{\begin{tikzpicture} [scale=1.2,transform shape, rotate = 0] 

\def\deltax{0.3} 
\def\deltay{0.5} 

\path[use as bounding box] (-\deltax,-0.7*\deltay) rectangle (\deltax,0.3*\deltay);

\node (mult) at (0,0.3*\deltay) [upground,scale=0.5] {};
\node (mult_label_in) at (0,-0.7*\deltay) {};
\draw[-] (mult_label_in) to (mult);

\end{tikzpicture}
}\!} 
	\newcommand{\hbox{\input{symbols/smallTraceSym.tex}}\!}{\hbox{\input{symbols/smallTraceSym.tex}}\!} 
	\newcommand{\hbox{\input{symbols/cotraceSym.tex}}\!}{\hbox{\input{symbols/cotraceSym.tex}}\!} 
	\newcommand{\trace}[1]{\hbox{}\!_{#1}} 
	\newcommand{\hbox{\input{symbols/algebraSym.tex}}\!\!}{\hbox{\input{symbols/algebraSym.tex}}\!\!}	
	\newcommand{\hbox{\input{symbols/measurementSym.tex}}\!\!}{\hbox{\input{symbols/measurementSym.tex}}\!\!} 
	\newcommand{\hbox{\input{symbols/repSym.tex}}\!\!}{\hbox{\input{symbols/repSym.tex}}\!\!} 
	\newcommand{\hbox{\input{symbols/mapSym.tex}}\!\!}{\hbox{\input{symbols/mapSym.tex}}\!\!} 
	\newcommand{\hbox{\input{symbols/mapconjSym.tex}}\!\!}{\hbox{\input{symbols/mapconjSym.tex}}\!\!} 
	\tikzset{
	  rectangle with rounded corners north west/.initial=4pt,
	  rectangle with rounded corners south west/.initial=4pt,
	  rectangle with rounded corners north east/.initial=4pt,
	  rectangle with rounded corners south east/.initial=4pt,
	}
	\tikzset{->-/.style={decoration={markings,mark=at position #1 with {\arrow{>}}},postaction={decorate}}}
	\tikzset{-<-/.style={decoration={markings,mark=at position #1 with {\arrow{<}}},postaction={decorate}}}
	\tikzstyle{every picture}=[baseline=-0.25em,scale=0.5]
	\tikzstyle{box} = [draw,shape=rectangle,inner sep=2pt,minimum height=6mm,minimum width=6mm,fill=white] 
	\tikzstyle{boxlarge} = [draw,shape=rectangle,inner sep=2pt,minimum height=1.5cm,minimum width=8mm,fill=white] 
	\tikzstyle{boxLarge} = [draw,shape=rectangle,inner sep=2pt,minimum height=2cm,minimum width=10mm,fill=white] 
	\tikzstyle{boxsmall} = [draw,shape=rectangle,inner sep=2pt,minimum height=3mm,minimum width=3mm,fill=white] 
	\tikzstyle{dot} = [inner sep=0mm,minimum width=3mm,minimum height=3mm,draw,shape=circle,text depth=-0.1mm]
	\tikzstyle{Zbwdot} = [dot, fill=\Zbwcolour]
	\tikzstyle{Xbwdot} = [dot, fill=\Xbwcolour]
	\tikzstyle{Ybwdot} = [dot, fill=\Ybwcolour]
	\tikzstyle{Wbwdot} = [dot, fill=\Wbwcolour]
	\tikzstyle{antipode} = [boxsmall] 
	\tikzstyle{state} = [draw, rectangle with rounded corners,
	\tikzstyle{statelarge} = [draw, rectangle with rounded corners,
	\tikzstyle{stateLarge} = [draw, rectangle with rounded corners,
	\tikzstyle{effect} = [draw, rectangle with rounded corners,
	\tikzstyle{scalar}=[diamond,draw,inner sep=1pt,font=\small,fill=white]
	\tikzstyle{cdnode}=[fill=white]
	\tikzstyle{labelnode}=[fill=white]
	\tikzstyle{tightlabelnode}=[fill=white,inner sep = 0.1mm]
	\tikzstyle{none}=[inner sep=0pt]
	\tikzstyle{whiteline}=[-, line width=4pt, draw=white]
	\tikzstyle{trace}=[circuit ee IEC,thick,ground,scale=2.5]
	\tikzstyle{cotrace}=[circuit ee IEC,thick,ground,rotate=180,scale=2.5]
	\tikzstyle{upground}=[circuit ee IEC,thick,ground,rotate=90,scale=2.5]
	\tikzstyle{downground}=[circuit ee IEC,thick,ground,rotate=-90,scale=2.5]
	\tikzstyle{doubled} = [line width=1.8pt] 
	\tikzstyle{empty diagram}=[draw=gray!40!white,dashed,shape=rectangle,minimum width=1cm,minimum height=1cm]
\renewcommand{\DoubledCategory}[1]{\operatorname{DBL}\left(#1\right)}
\newcommand{\DoublingFunctor}[1]{\operatorname{dbl}\left[#1\right]} 
\newcommand{\PhiFoldedCategory}[2]{\operatorname{FLD}_{#1}\left(#2\right)}
\newcommand{\PhiFoldingFunctorSym}[1]{\operatorname{fld}_{#1}} 
\newcommand{\PhiFoldingFunctor}[2]{\PhiFoldingFunctorSym{#1}\left[#2\right]}
\newcommand{\CPMFunctorSym}{\operatorname{CPM}}
\newcommand{\CPMFunctor}[1]{\CPMFunctorSym\left[#1\right]} 
\newcommand{\PhiXiCPMCategory}[3]{\CPMFunctorSym_{#1,#2}\left(#3\right)}
\newcommand{\conjFunctor}[1]{{conj}_{#1}}
\renewcommand{\Automs}[2]{\operatorname{Aut}_{#1}\left(#2\right)}
\newcommand{\SymMonCatUniverse}{\operatorname{SMCs}}
\newcommand{\DagSymMonCatUniverse}{\operatorname{DagSMCs}}
\newcommand{\DagCompCatUniverse}{\operatorname{DagCompSMCs}}
\newcommand{\SMCUnivsCategory}{\operatorname{SMCUnivs}}
\newcommand{\UnderlyingSMCFunctor}[2]{\llbracket#1\rrbracket_{#2}}
\newcommand{\UnderlyingSMC}[1]{\llbracket#1\rrbracket_{#1}}
\newcommand{\PreCPMFunctor}[1]{\operatorname{PreCPM}\left[#1\right]}
\newcommand{\norm}[1]{\operatorname{N}\left(#1\right)}
\title{Higher-order CPM Constructions}
\author{
	Stefano Gogioso\\
	University of Oxford \\
	\texttt{stefano.gogioso@cs.ox.ac.uk}
}
\begin{document}

\maketitle
\vspace{-2mm}

\begin{abstract}
	We define a higher-order generalisation of the CPM construction based on arbitrary finite abelian group symmetries of symmetric monoidal categories. We show that our new construction is functorial, and that its closure under iteration can be characterised by seeing the construction as an algebra for an appropriate monad. We provide several examples of the construction, connecting to previous work on the CPM construction and on categorical probabilistic theories, as well as upcoming work on higher-order interference and hyper-decoherence.
\vspace{-2mm}
\end{abstract}

\section{Introduction}

The CPM Construction \cite{selinger2007dagger} is of cardinal importance to the categorical study of quantum theory \cite{abramsky2004categorical,coecke2017picturing}, where it provides the canonical model of mixed-state quantum behaviour. It has been extensively studied, both axiomatically \cite{coecke2008axiomatic,coecke2016pictures,coecke2010environment,heunen2013completely} and concretely \cite{gogioso2015bestiary,marsden2017ambiguity,marsden2015graph,piedeleu2015open}. Recently, applications of the CPM construction in the context of compositional distributional models of meaning\cite{balkir2015distributional,bankova2016graded,marsden2017ambiguity,piedeleu2015open} have prompted renewed interest on iterated CPM constructions \cite{ashoush2016dual}, with the discovery of new features due to their additional degrees of freedom \cite{zwart2017double}. 

In this work, we define a theory of higher-order 
\footnote{By higher-order we mean that the abelian symmetry groups associated with our CPM constructions can have any finite order, as opposed to the order-2 symmetry group $\integersMod{2}$ associated with the traditional CPM construction on dagger-compact categories. }
CPM constructions, which we characterise as Eilenberg-Moore algebras for a certain monad. We connect to the recent work on iterated CPM constructions \cite{zwart2017double}. We provide a very broad family of examples obtained from categories of free finite-dimensional modules over commutative semirings \cite{gogioso2017fantastic}, showing that they can all be understood within the framework of categorical probabilistic theories \cite{gogioso2017categorical}.

\vspace{-2mm}
\section{The traditional CPM construction}

In the traditional formulation of \cite{selinger2007dagger} and subsequent work, the CPM construction can be understood in terms of two separate steps: \emph{doubling} and \emph{discarding}. By \emph{doubling}, we mean the passage from a dagger-compact category $\CategoryC$ to the corresponding doubled category $\DoubledCategory{\CategoryC}$. By \emph{discarding}, we mean the introduction of an environment structure $(\trace{A})_{A \in \obj{\CategoryC}}$ into the doubled category.

\subsection{Doubling}

The doubling step of the traditional CPM construction can be understood as the passage from $\CategoryC$ to the sub-category $\DoubledCategory{\CategoryC}$ of $\CategoryC$ obtained as the image of the following doubling functor:
\[
\begin{array}{ccc}
	\DoublingFunctor{A} & := & A \otimes A^\ast \\
	\DoublingFunctor{f} & := & f \otimes f^\ast
\end{array}
\]
This work uses a conjugation functor $(\emptyArg)^\ast$ which is both strict monoidal---i.e. $(A\otimes B)^\ast = (A^\ast \otimes B^\ast)$---and involutive---i.e. $\big((\emptyArg)^\ast\big)^\ast = \id{}$. The $\DoublingFunctor{\emptyArg}$ functor above respects the dagger, but if $\DoubledCategory{\CategoryC}$ is equipped with the tensor product inherited from $\CategoryC$ then the functor is not strict monoidal:
\[
	\DoublingFunctor{A\otimes B}
	=
	(A \otimes B)\otimes(A \otimes B)^\ast 
	=
	(A \otimes B)\otimes(A^\ast \otimes B^\ast) 
	\neq
	(A \otimes A^\ast)\otimes(B \otimes B^\ast)
	=
	\DoublingFunctor{A} \otimes \DoublingFunctor{B}
\]
The functor $\DoublingFunctor{\emptyArg}$ becomes strict monoidal if we equip the image with a different tensor product---analogous to the one of \cite{selinger2007dagger} on morphisms---which respects the structure of objects/morphisms:
\[
\begin{array}{ccccc}
	(A \otimes A^\ast) \boxtimes (B \otimes B^\ast) 
	& := &
	(A \otimes B) \otimes (A^\ast \otimes B^\ast)
	& = &
	(A \otimes B) \otimes (A \otimes B)^\ast
	\\
	(f \otimes f^\ast) \boxtimes (g \otimes g^\ast) 
	& := &
	(f \otimes g) \otimes (f^\ast \otimes g^\ast)
	& = &
	(f \otimes g) \otimes (f \otimes g)^\ast
\end{array}
\]
From this moment forward, when saying ``monoidal'' we will always mean ``strict monoidal''. Under this new tensor product, $\DoubledCategory{\CategoryC}$ is a dagger SMC, and the doubling functor is dagger monoidal . 

\vspace{-2mm}
\subsection{Discarding}

The discarding step of the traditional CPM construction can be understood as the introduction of a family of effects $(\trace{A}: A \otimes A^\ast \rightarrow I)_{A \in \obj{\CategoryC}}$ in $\CategoryC$ (the \emph{discarding maps}) which respect the tensor product of $\DoubledCategory{\CategoryC}$ in the following sense:
\[
\begin{array}{rcl}	
	\trace{A \otimes B} & = & (\trace{A} \otimes \trace{B}) \circ (\id{A} \otimes \sigma_{B,A^\ast} \otimes \id{B^\ast}) \\
	\trace{I} &=& 1
\end{array}
\]
where $\sigma_{A,B}:A \otimes B \rightarrow B \otimes A$ are the symmetry isomorphisms of $\CategoryC$. Traditionally, the chosen family is $\trace{A} := \varepsilon_{A}$, where by $\eta_{A}:I \rightarrow A^\ast \otimes A$ and $\varepsilon_{A}: A \otimes A^\ast \rightarrow I$ we will denote the cups and caps for the compact closed structure of $\CategoryC$.

Given the above, the CPM category $\CPMFunctor{\CategoryC}$ can be defined as the smallest sub-category of $\CategoryC$ containing the doubled category $\DoubledCategory{\CategoryC}$ and all the effects $(\trace{A})_{A \in \obj{\CategoryC}}$.
\footnote{This CPM category is equivalent to the one originally defined in \cite{selinger2007dagger}, but not identical: the objects in the original $\CPMFunctor{\CategoryC}$ are labelled by the objects $A$ of $\CategoryC$, while here $\CPMFunctor{\CategoryC}$ is defined directly as a sub-category of $\CategoryC$ with objects in the form $A \otimes A^\ast$.}
Because the effects are required to respect the monoidal structure of $\DoubledCategory{\CategoryC}$, the CPM category is itself a symmetric monoidal category with tensor product $\boxtimes$ extended as follows to arbitrary morphisms $F:\DoublingFunctor{A} \rightarrow \DoublingFunctor{C}$ and $G:\DoublingFunctor{B} \rightarrow \DoublingFunctor{D}$:
\[
	F \boxtimes G 
	:= 
	(\id{C} \otimes \sigma_{C^\ast,D} \otimes \id{D^\ast})
	\circ
	(F \otimes G)
	\circ
	(\id{A} \otimes \sigma_{A^\ast,B}^{-1} \otimes \id{B^\ast})
\]
This makes $(\trace{A})_{A \in \obj{\CategoryC}}$ an \emph{environment structure} for $\CPMFunctor{\CategoryC}$. Furthermore, the specific choice $\trace{A} := \varepsilon_{A}$
satisfies the following additional requirement, which makes $\CPMFunctor{\CategoryC}$ a dagger-compact category:
\[
	(\trace{A})^\dagger = (\trace{A^\ast} \boxtimes \DoublingFunctor{\id{A}}) \circ \DoublingFunctor{\eta_{A}}
\]
The condition above is in fact equivalent to requiring closure of the family under conjugation:
\[
	\trace{A}^\ast = \trace{A^\ast}
\]
In this work, we will keep the additional requirement above explicit, and not include it as part of the definition of environment structure\footnote{This follows the convention set in \cite{gogioso2017categorical} for environment structures in the context of Categorical Probabilistic Theories, where dagger-compact structure is not necessarily of interest.}.

\subsection{A symmetry perspective}
Previous literature on the CPM construction has focussed mostly on the connection with positive operators and completely positive maps \cite{coecke2016pictures,selinger2007dagger} in presence of compact closed structure. States $\rho:I \rightarrow A \otimes A^\ast$ in $\CPMFunctor{\CategoryC}$ correspond to positive morphisms in $\CategoryC$:
\[
	f \circ f^\dagger : A \rightarrow A
\]
where $f: C \rightarrow A$ is some morphism and $C$ is some object. Morphisms in $\CPMFunctor{\CategoryC}$ then correspond to \emph{super-operators} in $\CategoryC$, sending positive morphisms to positive morphisms:
\[
	\Big( f \circ f^\dagger: A \rightarrow A \Big) 
	\mapsto
	\Big( M \circ \big((f \circ f^\dagger) \otimes \id{E} \big) \circ M^\dagger: B \rightarrow B \Big)
\]
where $M: A \otimes E \rightarrow B$ is some morphism and $E$ is some object.

In this work, we will instead take a ``symmetry'' perspective on the CPM construction. To begin with, we observe that the following defines a group homomorphism $\Phi$ from the finite abelian group $\integersMod{2}$ to the group of monoidal automorphisms $\Automs{}{\CategoryC}$ (i.e. monoidal endofunctors with monoidal bilateral inverse) of the category $\CategoryC$:
\[
\begin{array}{cccc}
	\Phi: & \integersMod{2} & \rightarrow & \Automs{}{\CategoryC} \\
	& 0 & \mapsto & \id{\CategoryC} \\
	& 1 & \mapsto & \conjFunctor{\CategoryC}
\end{array}
\]
where $\id{\CategoryC}$ is the identity functor and $\conjFunctor{\CategoryC}$ is the conjugation functor. The doubling functor can then be re-cast as follows, in terms of the group homomorphism $\Phi$:
\[
	\DoublingFunctor{\emptyArg} 
	= 
	\bigotimes\limits_{g \in \integersMod{2}} \Phi(g)[\emptyArg]
	=
	\begin{cases}
		A \mapsto \bigotimes\limits_{g \in \integersMod{2}} \Phi(g)[A] = A \otimes A^\ast\\
		f \mapsto \bigotimes\limits_{g \in \integersMod{2}} \Phi(g)[f] = f \otimes f^\ast\\
	\end{cases}
\]
The actual choice of ordering for the tensor product is essentially irrelevant, as any two choices will lead to functors which are naturally isomorphic via conjugation by a permutation of the objects.

Out of all these natural permutation isomorphisms, we will in particular be interested in the ones corresponding to the regular action of $\integersMod{2}$ on the indices, i.e the natural transformations $\tau(g): \DoublingFunctor{\emptyArg} \Rightarrow \Phi(g)\big[\DoublingFunctor{\emptyArg}\big]$ for all $g \in \integersMod{2}$:
\[
\begin{array}{rcl}
	\tau_{A}(0) & := & \id{A \otimes A^\ast}\\
	\tau_{A}(1) & := & \sigma_{A,A^\ast}
\end{array}
\]
Using them, we can see that the autofunctors $\Phi(g)$ on $\DoubledCategory{\CategoryC}$ are all naturally isomorphic, via conjugation by the permutations $\tau(g)$, to the identity functor:
\[
\begin{array}{ccccc}
	\tau_{B}(0)^{-1} \circ \Phi(0)\big[f \otimes f^\ast\big] \circ \tau_{A}(0)
	& = & 
	\id{B \otimes B^\ast} \circ (f \otimes f^\ast) \circ \id{A \otimes A^\ast}
	& = & 
	f \otimes f^\ast
	\\
	\tau_{B}(1)^{-1} \circ \Phi(1)\big[f \otimes f^\ast\big] \circ \tau_{A}(1)
	& = & 
	\sigma_{B^\ast,B} \circ (f^\ast \otimes f) \circ \sigma_{A,A^\ast}
	& = & 
	f \otimes f^\ast
\end{array}
\]
This means that the morphisms in $\DoubledCategory{\CategoryC}$ are essentially invariant under the $\integersMod{2}$-action given by the autofunctors $\Phi(g)$.

In order to extend this invariance to the morphisms in $\CPMFunctor{\CategoryC}$, we need to make the following assumption on the discarding maps:
\[
	\trace{A}^\ast = \trace{A} \circ \sigma_{A,A^\ast}^{-1}
\]
Note that this assumption is different from the additional assumption $\trace{A}^\ast = \trace{A^\ast}$, but is equally satisfied by the traditional choice $\trace{A} := \varepsilon_{A}$. In terms of the autofunctors and natural transformation above, this means that:
\[
\begin{array}{ccccccc}
	\Phi(0)[\trace{A}] &=& \trace{A} &=& \trace{A} \circ \id{A \otimes A^\ast} &=& \trace{A} \circ \tau_{A}(0)^{-1}\\ 
	\Phi(1)[\trace{A}] &=& \trace{A}^\ast &=& \trace{A} \circ \sigma_{A,A^\ast}^{-1} &=& \trace{A} \circ \tau_{A}(1)^{-1}
\end{array}
\]
As a consequence, we get that the autofunctors $\Phi(g)$ on $\CPMFunctor{\CategoryC}$ are all naturally isomorphic, via conjugation by the permutations $\tau(g)$, to the identity functor (using the fact that the generic morphism in $\CPMFunctor{\CategoryC}$ takes the form $(\id{B} \otimes \trace{E} \otimes \id{B^\ast})\circ(f \otimes f^\ast)$ for some $f:A\rightarrow B \otimes E$ in $\CategoryC$):
\[
\begin{array}{rcl}
	\tau_{B}(0)^{-1}
	\circ
	\Phi(0)\big[
		(\id{B} \otimes \trace{E} \otimes \id{B^\ast})
		\circ
		(f \otimes f^\ast)
	\big]
	\circ
	\tau_{A}(0)
	&=&
		(\id{B} \otimes (\trace{E} \circ \id{E \otimes E^\ast})\otimes \id{B^\ast})
		\circ
		(f \otimes f^\ast)
	\\
	&=&
		(\id{B} \otimes \trace{E}\otimes \id{B^\ast})
		\circ
		(f \otimes f^\ast)
	\\
	\tau_{B}(1)^{-1}
	\circ
	\Phi(1)\big[
		(\id{B} \otimes \trace{E} \otimes \id{B^\ast})
		\circ
		(f \otimes f^\ast)
	\big]
	\circ
	\tau_{A}(1)
	&=&
		(\id{B} \otimes (\trace{E}^\ast \circ \sigma_{E, E^\ast})\otimes \id{B^\ast})
		\circ
		(f \otimes f^\ast)
	\\
	&=&
		(\id{B} \otimes \trace{E}\otimes \id{B^\ast})
		\circ
		(f \otimes f^\ast)
	\\
\end{array}
\]

Restricted to the monoid $(K,\otimes,1)$ of scalars for $\CategoryC$, the autofunctors $\Phi(g)$ define a group homomorphism from $\integersMod{2}$ to the group of monoid automorphisms of $K$. The invariance argument above shows that the scalars of $\DoubledCategory{\CategoryC}$ and $\CPMFunctor{\CategoryC}$ always fall within the sub-set of elements of $K$ which are left fixed by the $\integersMod{2}$-action. Particularly interesting is the case where $\CategoryC$ is enriched in commutative monoids
	\footnote{I.e. homsets come equipped with an addition $+$ and a zero morphism $0$, satisfying appropriate compatibility conditions (e.g. tensor, dagger, associator and unitors must all be linear).}
as a dagger symmetric monoidal category, so that the conjugation functor is automatically linear: this means that $K$ is naturally a commutative semiring $(K,+,0,\otimes,1)$ with involution $^\ast$, and the action of $\integersMod{2}$ coincides with the action of the involution. If $K$ is a field and conjugation is non-trivial, we can define $R$ to be the sub-field fixed by conjugation, and the doubling functor on scalars coincides with the field norm for the quadratic Galois extension $K/R$:
\[
	\DoublingFunctor{x} = x \otimes x^\ast = N_{K/R}(x)
\]
For example, in the case of $\CategoryC = \fHilbCategory$ we have $K=\complexs$ and $^\ast$ is complex conjugation, so that $K/R$ is the quadratic extension $\complexs/\reals$. If $\trace{A}=\sum_{i=1}^n \DoublingFunctor{\bra{a_n}}$ is any test made of product effects and $\DoublingFunctor{\ket{\psi}}$ is a product state on $A \otimes A^\ast$ which is normalised 
	\footnote{I.e. $\trace{A} \circ \DoublingFunctor{\ket{\psi}} = 1$.}
, then the above yields the Born rule for the ($R$-valued) probabilities of test outcomes:
\[
	\mathbb{P}(A_n | \psi) := \DoublingFunctor{\bra{a_n}} \circ \DoublingFunctor{\ket{\psi}} = N_{K/R} (\braket{a_n}{\psi})
\]
In the $\fHilbCategory$ case of $K/R = \complexs / \reals$ we recover the familiar form $N_{K/R}(\braket{a_n}{\psi}) = |\braket{a_n}{\psi}|^2$.

\section{The higher-order CPM construction}

\subsection{The folded category}
Consider a symmetric monoidal category $(\CategoryC,\otimes,I)$. Let $G$ be a finite abelian group, and $\Phi$ be a group homomorphism from $G$ to the group of monoidal automorphisms $\Automs{}{\CategoryC}$. The following definition generalises the construction of the doubled category from the symmetry perspective.
\begin{definition}
The \emph{$\Phi$-folding functor} is the endofunctor $\PhiFoldingFunctorSym{\Phi}$ on $\CategoryC$ defined as follows:
\[
	\PhiFoldingFunctor{\Phi}{\emptyArg}
	=
	\bigotimes\limits_{\gamma \in G} \Phi(\gamma)[\emptyArg]
	=
	\begin{cases}
		A \mapsto \bigotimes\limits_{\gamma \in G} \Phi(\gamma)[A]\\
		f \mapsto \bigotimes\limits_{\gamma \in G} \Phi(\gamma)[f]
	\end{cases}
\]
\end{definition}
\noindent From now on, we will require that $\CategoryC$ be chosen in such a way that the folding functor $\PhiFoldingFunctorSym{\Phi}$ is injective on objects, i.e. that $\PhiFoldingFunctor{\Phi}{A} = \PhiFoldingFunctor{\Phi}{B}$ implies $A = B$ for all $A, B \in \obj{\CategoryC}$. The following is then well-defined.
\begin{definition}
The \emph{$\Phi$-folded category} $\PhiFoldedCategory{\Phi}{\CategoryC}$ is the image of the $\Phi$-folding functor. The folding functor is also a functor $\CategoryC \rightarrow \PhiFoldedCategory{\Phi}{\CategoryC}$ which is bijective on objects.
\end{definition}

\newcounter{lemma_phifolded_c}
\setcounter{lemma_phifolded_c}{\value{theorem_c}}
\begin{lemma}
The $\Phi$-folded category is a symmetric monoidal category $(\PhiFoldedCategory{\Phi}{\CategoryC},\boxtimes,\PhiFoldingFunctor{\Phi}{I})$, with tensor product $\boxtimes$ defined as follows:
\[
	\PhiFoldingFunctor{\Phi}{A} \boxtimes \PhiFoldingFunctor{\Phi}{B}
	 := 
	\PhiFoldingFunctor{\Phi}{A \otimes B}
	\hspace{3cm}
	\PhiFoldingFunctor{\Phi}{f} \boxtimes \PhiFoldingFunctor{\Phi}{g}
	 := 
	\PhiFoldingFunctor{\Phi}{f \otimes g}
\]
The folding functor $\CategoryC \rightarrow \PhiFoldedCategory{\Phi}{\CategoryC}$ is a monoidal functor under this choice of monoidal structure.
\end{lemma}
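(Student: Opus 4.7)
The plan is to verify the claim in four stages: well-definedness of $\boxtimes$ on objects, well-definedness on morphisms, checking the symmetric monoidal coherence axioms, and finally deducing monoidality of the folding functor. Most of these are fairly mechanical once the morphism case is handled, so the main obstacle will be showing that $\PhiFoldingFunctor{\Phi}{f \otimes g}$ depends only on $\PhiFoldingFunctor{\Phi}{f}$ and $\PhiFoldingFunctor{\Phi}{g}$ rather than on the particular representatives $f, g \in \CategoryC$.

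For well-definedness on objects, I would invoke the standing assumption that $\PhiFoldingFunctorSym{\Phi}$ is injective on objects: any object $X$ of $\PhiFoldedCategory{\Phi}{\CategoryC}$ has a unique $A \in \obj{\CategoryC}$ with $X = \PhiFoldingFunctor{\Phi}{A}$, so $\PhiFoldingFunctor{\Phi}{A} \boxtimes \PhiFoldingFunctor{\Phi}{B} := \PhiFoldingFunctor{\Phi}{A \otimes B}$ is unambiguous. For the morphism case, the key observation is that each $\Phi(\gamma)$ is strict monoidal, so
\[
	\PhiFoldingFunctor{\Phi}{f \otimes g}
	= \bigotimes_{\gamma \in G} \big(\Phi(\gamma)[f] \otimes \Phi(\gamma)[g]\big),
\]
which is obtained from $\PhiFoldingFunctor{\Phi}{f} \otimes \PhiFoldingFunctor{\Phi}{g}$ (the ambient tensor in $\CategoryC$) by the canonical interleaving permutation of the $2|G|$ tensor factors. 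Denoting this symmetry $\mu_{A,B}: \PhiFoldingFunctor{\Phi}{A} \otimes \PhiFoldingFunctor{\Phi}{B} \rightarrow \PhiFoldingFunctor{\Phi}{A \otimes B}$, we get
\[
	\PhiFoldingFunctor{\Phi}{f \otimes g}
	= \mu_{B_1, B_2} \circ \big(\PhiFoldingFunctor{\Phi}{f} \otimes \PhiFoldingFunctor{\Phi}{g}\big) \circ \mu_{A_1, A_2}^{-1}
\]
for $f: A_1 \to B_1$ and $g: A_2 \to B_2$. Since the right-hand side depends only on the images $\PhiFoldingFunctor{\Phi}{f}, \PhiFoldingFunctor{\Phi}{g}$ and on the domain/codomain objects, well-definedness of $\boxtimes$ on morphisms follows. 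This is precisely the analogue of the reshuffling recipe used in the CPM definition of $\boxtimes$ recalled earlier in the paper.

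With $\boxtimes$ well-defined, functoriality (compatibility with composition and identities) is immediate from functoriality of $\PhiFoldingFunctorSym{\Phi}$ and bifunctoriality of $\otimes$ in $\CategoryC$. I would then take the associator, unitors, and braiding of $\PhiFoldedCategory{\Phi}{\CategoryC}$ to be the $\PhiFoldingFunctorSym{\Phi}$-images of the corresponding natural isomorphisms in $\CategoryC$: so $\alpha^{\boxtimes}_{\PhiFoldingFunctor{\Phi}{A}, \PhiFoldingFunctor{\Phi}{B}, \PhiFoldingFunctor{\Phi}{C}} := \PhiFoldingFunctor{\Phi}{\alpha_{A,B,C}}$, and similarly for the unitors (with $\PhiFoldingFunctor{\Phi}{I}$ as the tensor unit) and the symmetry. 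The triangle, pentagon, and hexagon equations all translate into the images of the same commuting diagrams in $\CategoryC$, so they hold automatically. The same reasoning shows that these are indeed natural isomorphisms in the image category.

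Finally, monoidality of the folding functor $\CategoryC \rightarrow \PhiFoldedCategory{\Phi}{\CategoryC}$ is built into the very definition: on objects we have $\PhiFoldingFunctor{\Phi}{A \otimes B} = \PhiFoldingFunctor{\Phi}{A} \boxtimes \PhiFoldingFunctor{\Phi}{B}$ and $\PhiFoldingFunctor{\Phi}{I}$ is the chosen tensor unit, while on morphisms we have $\PhiFoldingFunctor{\Phi}{f \otimes g} = \PhiFoldingFunctor{\Phi}{f} \boxtimes \PhiFoldingFunctor{\Phi}{g}$, making it strict monoidal. The anticipated main obstacle---well-definedness on morphisms---is thus dispatched by the explicit interleaving-symmetry formula, and everything else reduces to transporting structure along a functor that is bijective on objects.
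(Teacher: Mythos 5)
Your proposal is correct and is essentially the argument the paper has in mind: the appendix proof is simply ``entirely straightforward,'' and your key step---well-definedness of $\boxtimes$ on morphisms via conjugation by the interleaving permutation---is exactly the identity $\PhiFoldingFunctor{\Phi}{f} \boxtimes \PhiFoldingFunctor{\Phi}{g} = \pi_{C,D} \circ (\PhiFoldingFunctor{\Phi}{f} \otimes \PhiFoldingFunctor{\Phi}{g}) \circ \pi_{A,B}^{-1}$ that the paper itself records immediately after introducing $\pi$. Your use of injectivity on objects and strict monoidality of the $\Phi(\gamma)$ correctly isolates the only non-trivial points, so nothing is missing.
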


The choice of ordering for the tensor product is essentially irrelevant---as was the case for the ordinary doubling construction---since all possible choices lead to folding functors which are naturally isomorphic via conjugation by permutations on objects. Once again, we are interested in the natural isomorphisms arising from the regular action of G on the indices of the tensor product, i.e. the $\tau(\gamma) : \PhiFoldingFunctorSym{\Phi} \Rightarrow \Phi(\gamma) \circ \PhiFoldingFunctorSym{\Phi}$ defined as follows for all $\gamma \in G$:
\[
	\tau_{A}(\gamma) 
	:= 
	\text{ the unique permutation }
	\bigotimes\limits_{\delta \in G} \Phi(\delta)[A]
	\longrightarrow
	\bigotimes\limits_{\delta \in G} \Phi(\gamma \delta)[A]
\]
For example, for $G=\integersMod{3}$ we would have the following natural isomorphisms:
\[
\begin{array}{rcl}
	\tau_{A}(0) 
		&:=& 
		\id{A \otimes \Phi(1)[A] \otimes \Phi(2)[A]} 
		\\
	\tau_{A}(1) 
		&:=& 
		(\id{\Phi(1)[A]} \otimes \sigma_{A,\Phi(2)[A]})
		\circ 
		(\sigma_{A,\Phi(1)[A]} \otimes \id{\Phi(2)[A]})
		\\
	\tau_{A}(2) 
		&:=& 
		(\sigma_{A,\Phi(2)[A]} \otimes \id{\Phi(1)[A]})
		\circ 
		(\id{A} \otimes \sigma_{\Phi(1)[A],\Phi(2)[A]})
\end{array}
\]

By using the natural isomorphisms $\tau(\gamma)$, we can see that the monoidal autofunctors $\Phi(\gamma)$ on $\PhiFoldedCategory{\Phi}{\CategoryC}$ are all naturally isomorphic, via conjugation by the permutations $\tau(\gamma)$, to the identity functor:
\[
	\tau_{B}(\gamma)^{-1}
	\circ
	\Phi(\gamma)\Bigg[ \bigotimes\limits_{\delta \in G} \Phi(\delta)[f] \Bigg]
	\circ
	\tau_{A}(\gamma)
	=
	\tau_{B}(\gamma)^{-1}
	\circ
	\bigotimes\limits_{\delta \in G} \Phi(\gamma\delta)[f]
	\circ
	\tau_{A}(\gamma)
	=
	\bigotimes\limits_{\delta \in G} \Phi(\delta)[f]
\]
This means that the morphisms in $\PhiFoldedCategory{\Phi}{\CategoryC}$ are essentially invariant under the $G$-action given by the monoidal autofunctors $\Phi(\gamma)$. If $(K,\otimes,1)$ is the commutative monoid of scalars for $\CategoryC$, then $\Phi$ restricts to an action of $G$ on $K$ by monoid isomorphisms, and we can consider the sub-monoid $R_{\Phi}$ formed by the $G$-invariant elements of $K$. The remarks above show that the scalars of the $\Phi$-folded category are always a sub-monoid of $R_{\Phi}$ (since $\boxtimes$ and $\otimes$ are both the same monoid operation on scalars). If $\CategoryC$ is enriched in commutative monoids, the $\Phi(\gamma)$ autofunctors are linear and $K$ is a field, then the action of the folding functor on scalars corresponds the field norm $N_{K/R_{\Phi}}(x) = \otimes_{\gamma \in G} \Phi(\gamma)[x]$ for the Galois extension $K/R_{\Phi}$.

\subsection{The higher-order CPM construction}

In the previous section, we have seen that an environment structure for the CPM category can be defined by choosing a family of effects---the discarding maps---which respect the monoidal structure of the doubled category. Here we will be interested in the more general context where we choose multiple, compatible environment structures, which we use simultaneously to construct our generalised CPM categories. 

\newpage
Before moving on to do so, recall that the tensor product of maps in the CPM category involved a permutation in order to obtain a domain in the correct form. For the traditional second-order case, this only involved a swap. To deal succinctly with the more general case, we define the following natural isomorphism $\pi = (\pi_{A,B})_{A,B \in \obj{\CategoryC}}$:
\[
	\pi_{A,B}
	:= 
	\text{ the unique permutation }
	\bigotimes\limits_{\gamma \in G} \Phi(\gamma)[A]
	\otimes
	\bigotimes\limits_{\gamma \in G} \Phi(\gamma)[B]
	\longrightarrow
	\bigotimes\limits_{\gamma \in G} \Phi(\gamma)[A \otimes B]
\]
For example, for $G=\integersMod{3}$ we would have the following isomorphism:
\[
	\pi_{A,B}
	=
	(
		\id{A} 
		\otimes 
		\sigma_{\Phi(1)[A],B} 
		\otimes 
		\sigma_{\Phi(2)[A],\Phi(1)[B]}
		\otimes
		\id{\Phi(2)[B]}
	)
	\circ
	(
		\id{A}
		\otimes
		\id{\Phi(1)[A]}
		\otimes
		\sigma_{\Phi(2)[A],B}
		\otimes
		\id{\Phi(1)[B]}
		\otimes
		\id{\Phi(2)[B]}
	)
\]
In particular, the symmetric monoidal structure of the $\Phi$-folded category can be expressed in terms of $\pi$. If $f:A \rightarrow C$ and $g:B \rightarrow D$ are morphisms in $\CategoryC$, then we have that:
\[
	\PhiFoldingFunctor{\Phi}{f} \boxtimes \PhiFoldingFunctor{\Phi}{g}
	=
	\pi_{C,D} 
	\circ 
	(\PhiFoldingFunctor{\Phi}{f} \otimes \PhiFoldingFunctor{\Phi}{g}) 
	\circ
	\pi_{A,B}^{-1} 
\]

\begin{definition}
\label{defi_multiEnvStructure}
A multi-environment structure for $\Phi$ is a family $(\Xi_A)_{A \in \obj{\CategoryC}}$ of sets $\Xi_A$ of effects on $\PhiFoldingFunctor{\Phi}{A}$ in $\CategoryC$ which satisfies the following three conditions:
\begin{enumerate}
	\item[(i)] for all $\xi_A \in \Xi_A$ and all $\xi_B \in \Xi_B$ we have that $(\xi_A \otimes \xi_B) \circ \pi_{A,B}^{-1} \in \Xi_{A \otimes B}$;
	\item[(ii)] we have that $\Xi_{I} = \{1\}$;
	\item[(iii)] for all $\xi_A \in \Xi_A$ and all $\gamma \in G$ we have that $\Phi(\gamma)[\xi_A] = \xi_A \circ \tau_{A}(\gamma)^{-1}$.
\end{enumerate}
In particular, an environment structure for $\Phi$ is a multi-environment structure where each set $\Xi_A$ contains exactly one element, which we denote by $\trace{A}$.
\end{definition}

The multi-environment structures for a fixed $\Phi$ can be partially ordered by object-wise subset inclusion. The partial order is in fact a lattice, with meet given by object-wise set intersection and join given by suitable closure of object-wise set union.

\begin{definition}
\label{defi_PhiXiCPMCategory}
Given a multi-environment structure $\Xi = (\Xi_A)_{A \in \obj{\CategoryC}}$, the \emph{$(\Phi,\Xi)$-CPM category}, which we denote by $\PhiXiCPMCategory{\Phi}{\Xi}{\CategoryC}$, is defined to be the smallest sub-category of $\CategoryC$ which contains $\PhiFoldedCategory{\Phi}{\CategoryC}$ as well as all maps in the following form:
\[
	(\xi_A \otimes \id{\PhiFoldingFunctor{\Phi}{B}}) \circ \pi_{A,B}^{-1}
\]
for all pairs of objects $A,B \in \obj{\CategoryC}$ and all effects $\xi_A \in \Xi_A$ in the multi-environment structure.
\end{definition}

\newcounter{lemma_phixicpm_c}
\setcounter{lemma_phixicpm_c}{\value{theorem_c}}
\begin{lemma}
We can extend the tensor product $\boxtimes$ of $\PhiFoldedCategory{\Phi}{\CategoryC}$ as follows to turn $\PhiXiCPMCategory{\Phi}{\Xi}{\CategoryC}$ into a symmetric monoidal category, having $\PhiFoldedCategory{\Phi}{\CategoryC}$ as a monoidal subcategory:
\[
	F \boxtimes G := \pi_{C,D} \circ (F \otimes G) \circ \pi_{A,B}^{-1}
\]
where $F:\PhiFoldingFunctor{\Phi}{A} \rightarrow \PhiFoldingFunctor{\Phi}{C}$ and $G: \PhiFoldingFunctor{\Phi}{B} \rightarrow \PhiFoldingFunctor{\Phi}{D}$ are generic morphisms in $\PhiXiCPMCategory{\Phi}{\Xi}{\CategoryC}$.
\end{lemma}

We refer to the SMC constructed above as a \emph{higher-order CPM construction}. By analogy to the traditional CPM construction, it is easy to see that the morphisms of $\PhiXiCPMCategory{\Phi}{\Xi}{\CategoryC}$ can always be put into the following normal form---by sliding the multi-environment effects around, and using conditions (i) and (ii) of Definition \ref{defi_multiEnvStructure}---where $f:A \rightarrow B \otimes E$ is a morphism in $\CategoryC$ and $\xi_E \in \Xi_E$ for some $E \in \obj{\CategoryC}$:
\[
	(\id{\PhiFoldingFunctor{\Phi}{B}} \boxtimes \xi_E ) \circ \PhiFoldingFunctor{\Phi}{f}
\]
As a consequence of condition (iii) for the multi-environment structure $\Xi$, we get that the autofunctors $\Phi(\gamma)$ on $\PhiXiCPMCategory{\Phi}{\Xi}{\CategoryC}
$ are all naturally isomorphic, via conjugation by the permutations $\tau(\gamma)$, to the identity functor:
\[
\begin{array}{rcl}
	\tau_{B}(\gamma)^{-1}
	\circ
	\Phi(\gamma)
	\big[
		(\id{\PhiFoldingFunctor{\Phi}{B}} \boxtimes \xi_E)
		\circ
		\PhiFoldingFunctor{\Phi}{f}
	\big]
	\circ
	\tau_{A}(\gamma)
	&=&
	\Big(
		\id{\PhiFoldingFunctor{\Phi}{B}}
		\boxtimes
		\big(\Phi(\gamma)[\xi_E]\circ\tau_{E}(\gamma)\big)
	\Big)
	\circ 
	\PhiFoldingFunctor{\Phi}{f}
	\\
	&=&
	(\id{\PhiFoldingFunctor{\Phi}{B}} \boxtimes \xi_E)
	\circ
	\PhiFoldingFunctor{\Phi}{f}
\end{array}
\]

\subsection{Functoriality of the higher-order CPM construction}

In order to understand the functorial and iterative properties of the higher-order CPM construction, we introduce the notion of a ``universe of symmetric monoidal structures''.

\begin{definition}
Let $\SymMonCatUniverse$ be  the category of (suitably small) symmetric monoidal categories and monoidal functors between them. By a \emph{SMC-universe} we mean a category $\Theta$ equipped with a faithful functor $\UnderlyingSMCFunctor{\emptyArg}{\Theta}:\Theta \rightarrow \SymMonCatUniverse$ with image $\UnderlyingSMC{\Theta}$ forming a sub-category of $\SymMonCatUniverse$. We refer to $\UnderlyingSMCFunctor{\emptyArg}{\Theta}$ as the \emph{underlying SMC functor}.
\end{definition}

\noindent SMC-universes are essentially categories where the objects are symmetric monoidal categories and the morphisms are monoidal functors between them, where the categories can be thought to have been equipped with some additional information, and the morphisms between them restricted based upon that information. Some interesting examples of SMC-universes include the following:
\begin{itemize}
	\item the category $\SymMonCatUniverse$ of symmetric monoidal categories and monoidal functors (equipped with the identity $\UnderlyingSMCFunctor{\emptyArg}{\SymMonCatUniverse} = \id{\SymMonCatUniverse}$);
	\item the category $\DagSymMonCatUniverse$ of dagger symmetric monoidal categories and dagger monoidal functors (equipped with the sub-category inclusion $\UnderlyingSMCFunctor{\emptyArg}{\DagSymMonCatUniverse}$ into ${\SymMonCatUniverse}$);
	\item the category $\DagCompCatUniverse$ of dagger-compact categories with chosen duals and dagger monoidal functors preserving chosen duals (equipped again with the sub-category inclusion into ${\SymMonCatUniverse}$).
\end{itemize}
The reason to introduce the additional layer of abstraction given by the underlying SMC functor, rather than simply considering sub-categories of $\SymMonCatUniverse$ as in the examples above, can be summarised as follows: in order to characterise the higher-order CPM construction as a functor, we need to equip symmetric monoidal categories with additional data (the action $\Phi$ and the multi-environment structure $\Xi$), and we need to restrict the functors allowed between them based on that data (i.e. we will require that our functors be $G$-equivariant and respect the multi-environment structure).

\begin{definition}
By a \emph{morphism of SMC-universes} $(\zeta,n^\zeta):\Theta \rightarrow \Theta'$ we mean a functor $\zeta:\Theta \rightarrow \Theta'$ together with a natural transformation $n^\zeta: \UnderlyingSMCFunctor{\emptyArg}{\Theta} \Rightarrow \UnderlyingSMCFunctor{\zeta(\emptyArg)}{\Theta'}$. We denote the category of SMC-universes and morphisms between them by $\SMCUnivsCategory$.
\end{definition}

\noindent One interesting example of morphism between SMC universes is given by the traditional CPM construction, which we can write as $\CPMFunctor{\emptyArg}:\DagCompCatUniverse\rightarrow\DagCompCatUniverse$ in the following way:
\begin{itemize}
	\item the functor $\zeta: \DagCompCatUniverse \rightarrow \DagCompCatUniverse$ is $\zeta = \CPMFunctor{\emptyArg}$, sending a dagger-compact category $\CategoryC$ to $\CPMFunctor{\CategoryC}$ and a dagger monoidal functor $F:\CategoryC \rightarrow \CategoryD$ to its restriction $\CPMFunctor{\CategoryC} \rightarrow \CPMFunctor{\CategoryD}$ (recalling that $\CPMFunctor{\CategoryC}$ is defined as a sub-category of $\CategoryC$ in this work); 
	\item the natural transformation $\UnderlyingSMCFunctor{\emptyArg}{\DagCompCatUniverse} \Rightarrow \UnderlyingSMCFunctor{\CPMFunctor{\emptyArg}}{\DagCompCatUniverse}$ is given by the doubling functor $\DoublingFunctor{\emptyArg} : \CategoryC \rightarrow \CPMFunctor{\CategoryC}$, where we have used the fact that $\DagCompCatUniverse$ is a sub-category of $\SymMonCatUniverse$, so that we have $\CategoryC = \UnderlyingSMCFunctor{\CategoryC}{\DagCompCatUniverse}$ and $\CPMFunctor{\CategoryC} = \UnderlyingSMCFunctor{\CPMFunctor{\CategoryC}}{\DagCompCatUniverse}$.
\end{itemize}

\begin{definition}
Let $\Theta$ be a SMC-universe. Then the category $\PreCPMFunctor{\Theta}$ is defined as follows.
\begin{enumerate}
	\item[(i)] The objects of $\PreCPMFunctor{\Theta}$ are in the form $(\CategoryC,\Phi,\Xi)$, where;
	\begin{itemize}
		\item $\CategoryC$ is an object in $\Theta$;
		\item $\Phi$ is a group  homomorphism from a finite abelian group $G$ to the automorphisms $\Automs{\Theta}{\CategoryC}$;
		\item $\Xi$ is a multi-environment structure for $\UnderlyingSMCFunctor{\Phi}{\Theta}$;
		\item the $(\UnderlyingSMCFunctor{\Phi}{\Theta},\Xi)$-CPM category $\PhiXiCPMCategory{\UnderlyingSMCFunctor{\Phi}{\Theta}}{\Xi}{\UnderlyingSMCFunctor{\CategoryC}{\Theta}}$ is an object of $\UnderlyingSMC{\Theta}$;
		\item the $\UnderlyingSMCFunctor{\Phi}{\Theta}$-folding functor $\PhiFoldingFunctorSym{\UnderlyingSMCFunctor{\Phi}{\Theta}}: \UnderlyingSMCFunctor{\CategoryC}{\Theta} \rightarrow \PhiXiCPMCategory{\UnderlyingSMCFunctor{\Phi}{\Theta}}{\Xi}{\UnderlyingSMCFunctor{\CategoryC}{\Theta}}$ is a morphism of $\UnderlyingSMC{\Theta}$;
		\item the $\UnderlyingSMCFunctor{\Phi}{\Theta}$-folding functor $\PhiFoldingFunctorSym{\UnderlyingSMCFunctor{\Phi}{\Theta}}$ is injective on objects;
	\end{itemize}
	\item[(ii)] The morphisms $(\CategoryC,\Phi,\Xi) \rightarrow (\CategoryC',\Phi',\Xi')$ in $\PreCPMFunctor{\Theta}$, where $\Phi$ and $\Phi'$ are both actions for the same finite abelian group $G$, are the morphisms $F: \CategoryC \rightarrow \CategoryC'$ in $\Theta$ which satisfy the following:
	\begin{itemize}
		\item $F$ is \emph{$G$-equivariant}, in the sense that for all $\gamma \in G$ we have:
		\[
			F \circ \Phi(\gamma) = \Phi'(\gamma) \circ F
		\]
		\item $\UnderlyingSMCFunctor{F}{\Theta}$ \emph{respects the multi-environment structure}, in the sense that for all $A \in \obj{\UnderlyingSMCFunctor{\CategoryC}{\Theta}}$ we have:
		\[
			\Big\{ \UnderlyingSMCFunctor{F}{\Theta}(\xi_A) \Big| \xi_A \in \Xi_A \Big\} \subseteq \Xi'_{\UnderlyingSMCFunctor{F}{\Theta}(A)}
		\]
	\end{itemize}
	If $\Phi$ and $\Phi'$ are not actions for the same finite abelian group $G$, then there are no morphisms between $(\CategoryC,\Phi,\Xi) $ and $(\CategoryC',\Phi',\Xi')$.
\end{enumerate}
\end{definition}

\newcounter{lemma_precpmuniverse_c}
\setcounter{lemma_precpmuniverse_c}{\value{theorem_c}}
\begin{lemma}
There is a faithful and surjective functor $\UnderlyingSMCFunctor{\emptyArg}{\PreCPMFunctor{\Theta}\rightarrow\Theta}:\PreCPMFunctor{\Theta} \rightarrow \Theta$ which sends $(\CategoryC,\Phi,\Xi)$ to $\CategoryC$ and is the identity on morphisms. The category $\PreCPMFunctor{\Theta}$ is an SMC-universe with underlying SMC functor $\UnderlyingSMCFunctor{\emptyArg}{\PreCPMFunctor{\Theta}}$ defined by $\UnderlyingSMCFunctor{\emptyArg}{\PreCPMFunctor{\Theta}}:= \UnderlyingSMCFunctor{\UnderlyingSMCFunctor{\emptyArg}{\PreCPMFunctor{\Theta}\rightarrow \Theta}}{\Theta}$.
\end{lemma}

\noindent The definition of the SMC-universe $\PreCPMFunctor{\Theta}$ allows us to detail the functorial properties of the higher-order CPM construction in full generality.

\newcounter{lemma_higherordercpmfunctor_c}
\setcounter{lemma_higherordercpmfunctor_c}{\value{theorem_c}}
\begin{lemma}
	Let $\Theta$ be a sub-category of $\SymMonCatUniverse$, seen as a SMC-universe where $\UnderlyingSMCFunctor{\emptyArg}{\Theta}$ is the sub-category inclusion (so that $\UnderlyingSMC{\Theta} = \Theta$). Then the higher-order CPM construction can be used to define a morphism of SMC-universes $(\CPMFunctorSym,n^{\CPMFunctorSym}):\PreCPMFunctor{\Theta} \rightarrow \Theta$ as follows:
	\begin{itemize}
	 	\item the functor $\CPMFunctorSym: \PreCPMFunctor{\Theta} \rightarrow \Theta$ is the one sending an object $(\CategoryC,\Phi,\Xi)$ of $\PreCPMFunctor{\Theta}$ to the object $\PhiXiCPMCategory{\Phi}{\Xi}{\CategoryC}$ of $\Theta$, and acting as the identity on morphisms;
	 	\item the natural transformation $n^{\CPMFunctorSym}$ is given by the $\Phi$-folding functor $\PhiFoldingFunctorSym{\Phi}:\CategoryC \rightarrow \PhiXiCPMCategory{\Phi}{\Xi}{\CategoryC}$.
	 \end{itemize} 
\end{lemma}
\noindent In particular, the result above shows that the higher-order CPM construction is functorial over monoidal functors which are $G$-equivariant and respect the multi-environment structure.

\vspace{-2mm}
\subsection{The higher-order CPM construction as an Eilenberg-Moore algebra}

The traditional CPM construction can be iterated, but the combined result of multiple iterations is not itself a CPM construction: this is because traditional CPM construction is defined to be second-order (i.e. it corresponds to a $\integersMod{2}$ symmetry), while its iterations are higher-order (i.e. they correspond to $\integersMod{2}^n$ symmetries). The higher-order CPM construction does not have this restriction, and its behaviour under iteration can be easily understood in terms of a monad. The entire construction is essentially predicated on the following observations.

If we have two actions $\Phi:G \rightarrow \Automs{\Theta}{\CategoryC}$ and $\Phi':G' \rightarrow \Automs{\Theta}{\CategoryC}$ which commute, i.e. which satisfy $\Phi(\gamma) \Phi'(\gamma') = \Phi'(\gamma')\Phi(\gamma)$ for all $(\gamma,\gamma') \in G \times G'$, then we can combine them into a new action $\Phi \odot \Phi': (G \times G') \rightarrow \Automs{\Theta}{\CategoryC}$ as follows:
\[
	(\Phi \odot \Phi')(\gamma,\gamma') := \Phi(\gamma)\Phi'(\gamma')
\]
This way, $\odot$ can be taken to define a commutative monoid operation on the actions of finite abelian groups over a fixed object $\CategoryC$ of a fixed SMC-universe $\Theta$, where the unit is the trivial action $1:\integersMod{1} \rightarrow \Automs{\Theta}{\CategoryC}$ given by the identity automorphism.

If $\Xi$ is a multi-environment structure for $\UnderlyingSMCFunctor{\Phi}{\Theta}$, then we can define a multi-environment structure $\PhiFoldingFunctor{\Phi'}{\Xi}$ for $\UnderlyingSMCFunctor{\Phi \odot \Phi'}{\Theta}$ as $\PhiFoldingFunctor{\Phi'}{\Xi}_{A} := \Big\{ \PhiFoldingFunctor{\Phi'}{\xi_{A}} \Big| \xi_A \in \Xi_A \Big\}$. Similarly, if $\Xi'$ is a multi-environment structure for $\UnderlyingSMCFunctor{\Phi'}{\Theta}$ then we can define a multi-environment structure $\PhiFoldingFunctor{\Phi}{\Xi'}$ for $\UnderlyingSMCFunctor{\Phi \odot \Phi'}{\Theta}$ as $\PhiFoldingFunctor{\Phi}{\Xi'}_{A} := \Big\{ \PhiFoldingFunctor{\Phi}{\xi'_{A}} \Big| \xi'_A \in \Xi'_A \Big\}$. Given both $\Xi$ and $\Xi'$, we use the lattice operations on multi-environment structures to define the product $\Xi \odot \Xi'$ as a multi-environment structure for $\UnderlyingSMCFunctor{\Phi \odot \Phi'}{\Theta}$:
\[
	\Xi \odot \Xi' := \PhiFoldingFunctor{\Phi'}{\Xi} \bigvee \PhiFoldingFunctor{\Phi}{\Xi'}
\]
We also define the \emph{trivial} multi-environment structure $1$ for $\UnderlyingSMCFunctor{1}{\Theta}$ as follows:
\[
	1_A 
	:= 
	\begin{cases}
		\{1\} &\text{ if } A \isom I\\
		\emptyset &\text{ otherwise}
	\end{cases}
\]
This way, $\odot$ can be taken to define a commutative monoid operation on multi-environment structures, compatible with the commutative monoid operation previously defined on the underlying actions.

\newcounter{theorem_cpmalgebra_c}
\setcounter{theorem_cpmalgebra_c}{\value{theorem_c}}
\begin{theorem}
The map $\PreCPMFunctor{\emptyArg}$ can be extended to an endofunctor of $\SMCUnivsCategory$ by defining its action on morphisms $(\zeta,n^{\zeta}):\Theta \rightarrow \Theta'$ of $\SMCUnivsCategory$ as follows:
\begin{itemize}
	\item the functor $\PreCPMFunctor{\Theta} \rightarrow \PreCPMFunctor{\Theta'}$ is given by:
	\[
	\begin{array}{rcl}
		(\CategoryC,\Phi,\Xi) &\mapsto& (\zeta(\CategoryC),\zeta(\Phi),n^{\zeta}(\Xi)) \\
		F &\mapsto& \zeta(F)
	\end{array}
	\]
	where $\zeta(\Phi)$ is the group homomorphism $\gamma \mapsto \zeta(\Phi(\gamma))$, and we define the multi-environment structure $n^{\zeta}(\Xi)_{A}:=\{ n^{\zeta}_{\CategoryC}(\xi_A) | \xi_A \in \Xi_A \}$;
	\item the natural transformation $\UnderlyingSMCFunctor{\CategoryC}{\Theta} \rightarrow \UnderlyingSMCFunctor{\zeta(\CategoryC)}{\Theta'}$ is given by $n^{\zeta}_{\CategoryC}$.
\end{itemize}
The endofunctor $\PreCPMFunctor{\emptyArg}$ is a monad with the following multiplication $\mu_{\Theta}:\PreCPMFunctor{\PreCPMFunctor{\Theta}} \rightarrow \PreCPMFunctor{\Theta}$ and unit $\eta_{\Theta}: \Theta \rightarrow \PreCPMFunctor{\Theta}$:
\begin{itemize}
	\item the functor $\PreCPMFunctor{\PreCPMFunctor{\Theta}} \rightarrow \PreCPMFunctor{\Theta}$ for the multiplication $\mu$ is given by:\[
	\begin{array}{rcl}
		((\CategoryC,\Phi,\Xi),\Phi',\Xi') &\mapsto& (\CategoryC,\Phi \odot \Phi', \Xi \odot \Xi') \\
		F &\mapsto& F
	\end{array}
	\]
	\item the functor $\Theta \rightarrow \PreCPMFunctor{\Theta}$ for the unit $\eta$ is given by:
	\[
	\begin{array}{rcl}
		\CategoryC &\mapsto& (\CategoryC,1,1) \\
		F &\mapsto& F
	\end{array}
	\]
	\item the natural transformations for both the multiplication $\mu$ and the unit $\eta$ are identity functors:
	\[
		\id{\UnderlyingSMCFunctor{\CategoryC}{\Theta}}: \UnderlyingSMCFunctor{\CategoryC}{\Theta} \rightarrow \UnderlyingSMCFunctor{\CategoryC}{\Theta}
	\]
\end{itemize}
If $\Theta$ is a sub-category of $\SymMonCatUniverse$, then $\CPMFunctorSym: \PreCPMFunctor{\Theta} \rightarrow \Theta$ is an Eilenberg-Moore algebra for the monad $\PreCPMFunctor{\emptyArg}$.
\end{theorem}

\newpage
\section{Examples}

\subsection{Iterated CPM construction}

The simplest example of higher-order CPM construction is given by iterating the traditional second-order CPM construction on a dagger-compact category $\CategoryC$. At the first level, this means choosing the following monoidal $\integersMod{2}$ action $\Phi$ on $\CategoryC$:
\[
\begin{array}{cccc}
	\Phi: & \integersMod{2} & \rightarrow & \Automs{}{\CategoryC} \\
	& 0 & \mapsto & \id{\CategoryC} \\
	& 1 & \mapsto & \conjFunctor{\CategoryC}
\end{array}
\]
together with the environment structure $\Xi_A := \{\varepsilon_A\}$ given by the caps. The $n$-th iteration of the second-order construction is captured by the higher-order construction with $\integersMod{2}^n$ action $\bigodot_{j=1}^{n} \Phi$ and associated multi-environment structure $\bigodot_{j=1}^{n} \Xi$. Explicitly, the group action $\bigodot_{j=1}^{n} \Phi$ takes the following form:
\[
	\Big(\bigodot_{j=1}^{n} \Phi\Big)(b_1,...,b_n) 
	= 
	\Phi(b_n)...\Phi(b_1)
	=
	\big(\conjFunctor{\CategoryC}\big)^{b_1 \oplus ... \oplus b_n}
	=
	\begin{cases}
		\id{\CategoryC} &\text{ if } b_1 \oplus ... \oplus b_n = 0 \\
		\conjFunctor{\CategoryC} &\text { if } b_1 \oplus ... \oplus b_n = 1
	\end{cases}
\]
Explicitly, the effects in the multi-environment structure $\bigodot_{j=1}^{n} \Xi$ are generated by the following effects, for all $i=1,...,n$ and all $A \in \obj{\CategoryC}$:
\[
	\varepsilon_{A}^{(i)} 
	:= 
	\PhiFoldingFunctor{\bigodot_{j=i+1}^{n} \Phi}{ 
		\varepsilon_{\PhiFoldingFunctor{\bigodot_{j=1}^{i-1} \Phi}{A}}
	}
\]
In particular, the double-dilation construction of Zwart and Coecke \cite{zwart2017double} arises as the fourth-order CPM construction with $\integersMod{2}\times\integersMod{2}$ group action and effects in the multi-environment structure generated by:
\[
	\varepsilon_{A}^{(1)} 
	:= 
	\varepsilon_{A} \otimes \varepsilon_{A}^\ast 
	= 
	\PhiFoldingFunctor{\Phi}{\varepsilon_{A}}
	\hspace{3cm}
	\varepsilon_{A}^{(2)}
	:=
	\varepsilon_{A \otimes A^\ast}
	=
	\varepsilon_{\PhiFoldingFunctor{\Phi}{A}}
\]

A handy way of visualising the $\integersMod{2}^n$ symmetry of the iterated CPM construction theory is to imagine the objects in the tensor product $\bigotimes_{(b_1,...,b_n) \in \integersMod{2}^n} \Phi(b_1,...,b_n)[A]$ to be arranged on the vertices of an $n$-dimensional hypercube, centred at the origin and aligned with Cartesian axes in $n$-dimensional space. We take the $i$-th generator $g^{(i)}:=(0,...,0,1,0,...,0)$ of $\integersMod{2}^n$ to act on the hypercube as the reflection $r^{(i)}$ about the $(n-1)$-dimensional hyperplane orthogonal to the $i$-th Cartesian axis, sending each $A$ vertex to the corresponding $A^\ast$ vertex under $\Phi(g^{(i)})$. This way, the $i$-th level generating effect $\varepsilon_{A}^{(i)}$ for the multi-environment structure is exactly the one given by caps connecting each $A$ vertex of the hypercube with the $A^\ast$ vertex obtained via the reflection $r^{(i)}$. In the double-dilation case, this gives the following square diagrams for the generating effects:
\[
	\varepsilon_{A}^{(1)}:=
	\hspace{1mm}
	\scalebox{0.7}{
	\begin{tikzpicture}{}
		\node[circle,fill=black,inner sep=2pt] (bl) at (-1,-1) {};
		\node[circle,fill=black,inner sep=2pt] (br) at (+1,-1) {};
		\node[circle,fill=black,inner sep=2pt] (tl) at (-1,+1) {};
		\node[circle,fill=black,inner sep=2pt] (tr) at (+1,+1) {};
		\draw[->-=.65,line width=2pt] (tl.center) to (tr.center);
		\draw[->-=.65,line width=2pt] (br.center) to (bl.center);
		\node (bl) at (-1.75,-1.75) {$A^\ast$};
		\node (br) at (+1.75,-1.75) {$A$};
		\node (tl) at (-1.75,+1.75) {$A$};
		\node (tr) at (+1.75,+1.75) {$A^\ast$};
	\end{tikzpicture}
	}
	\hspace{2cm}
	\varepsilon_{A}^{(2)}:=
	\hspace{1mm}
	\scalebox{0.7}{
	\begin{tikzpicture}{}
		\node[circle,fill=black,inner sep=2pt] (bl) at (-1,-1) {};
		\node[circle,fill=black,inner sep=2pt] (br) at (+1,-1) {};
		\node[circle,fill=black,inner sep=2pt] (tl) at (-1,+1) {};
		\node[circle,fill=black,inner sep=2pt] (tr) at (+1,+1) {};
		\draw[->-=.65,line width=2pt] (tl.center) to (bl.center);
		\draw[->-=.65,line width=2pt] (br.center) to (tr.center);
		\node (bl) at (-1.75,-1.75) {$A^\ast$};
		\node (br) at (+1.75,-1.75) {$A$};
		\node (tl) at (-1.75,+1.75) {$A$};
		\node (tr) at (+1.75,+1.75) {$A^\ast$};
	\end{tikzpicture}
	}
\]

\subsection{Categories of free finite-dimensional modules}

Iteration of the traditional CPM construction only yields higher-order examples corresponding $\integersMod{2}^n$ conjugating symmetries. In order to construct more interesting examples, we focus on a family of dagger-compact categories with much richer structure, namely the categories $\RMatCategory{S}$ of free finite-dimensional modules over a commutative involutive semiring $S$. In \cite{gogioso2017fantastic}, these categories have been shown to capture a number of well-studied toy models of quantum theory, including real quantum theory, hyperbolic quantum theory, modal quantum theories and the category fRel of finite sets and relations.

We define $\RMatCategory{S}$ to have natural numbers $\naturals$ as objects, and the $m$-by-$n$ $S$-valued matrices as morphisms $n \rightarrow m$. The category has tensor product given by the Kronecker product of matrices, $S$-linear structure given by the $S$-linear structure of matrices and each object $n$ comes with a standard orthonormal basis $\ket{i}_{i=1,...,n}$. Point-wise conjugation of matrices is defined in the standard orthonormal basis using the involution of $S$: dagger, cups and caps are then constructed as in $\fHilbCategory$. From now on, we work in the SMC-universe $\Theta$ of SMCs enriched in commutative monoids with linear functors between them.

A fairly standard way of making higher-order CPM constructions on $S$-Mat is to consider a homomorphism $\varphi:G \rightarrow \Automs{}{S}$ from some finite abelian group $G$ into the semiring automorphisms of $S$. The action $\Phi:G \rightarrow \Automs{\Theta}{\RMatCategory{S}}$ can then be defined as the identity $\Phi(\gamma)[n]=n$ on objects and as follows on morphisms:
\[
	\Phi(\gamma)
	\Bigg(
		\sum_{i=1}^{m} \sum_{j=1}^{n} M_{ij} \ket{i} \bra{j}
	\Bigg)
	:=
	\sum_{i=1}^{m} \sum_{j=1}^{n} \varphi(\gamma)\big(M_{ij}\big) \ket{i} \bra{j}
\]
In particular, picking $G:=\integersMod{2}$ and $\varphi(1) := z \mapsto z^\ast$ will yield a $\Phi$-folded category which is isomorphic to the one obtained from the traditional second-order CPM construction.

Because all objects $n>1$ can be uniquely decomposed (up to permutation) as a product of primes, a multi-environment structure $\Xi$ can be defined by taking sets $\Xi_p$ of effects for all primes $p$, and then closing them under tensor product $\boxtimes$. As a special case, an environment structure generalising the one from the traditional CPM construction can be defined by taking $\Xi_n:=\{\trace{n}\}$ to be the singleton containing the following effect:
\[
	\trace{n} := \sum_{j=1}^{n} \, \PhiFoldingFunctor{\Phi}{\bra{j}}
\]
In the $\integersMod{2} \times \integersMod{2}$ case of double-dilation, we might consider replacing the $\varepsilon_{n}^{(1)}$ effects with the $\trace{n}$ effects defined above, and doing so results in the double-mixing construction.

If $\ket{\psi} := \sum_{j=1}^{n} \psi_j \ket{j}$ is any vector on $n$, then we define its \emph{norm} $\norm{\ket{\psi}}$ to be the following higher-order generalisation of the quadratic trace $\Trace{\ket{\psi}\bra{\psi}}$ for pure states in quantum theory:
\[
	\norm{\ket{\psi}} 
	:= 
	\trace{n} \circ \PhiFoldingFunctor{\Phi}{\ket{\psi}} 
	= 
	\sum_{j=1}^n \norm{\psi_j}
	\hspace{1cm}
	\text{where}
	\hspace{1cm}
	\norm{\psi_j}
	:=
	\prod_{\gamma \in G} \varphi(\gamma)(\psi_j)
\]
Normalisation of a pure state $\PhiFoldingFunctor{\Phi}{\ket{\psi}}$ in the higher-order CPM category $\PhiXiCPMCategory{\Phi}{\Xi}{\RMatCategory{S}}$ has nothing to do with inner products, and is instead the same as having coordinates with norms $\norm{\psi_j}$ adding to 1.\footnote{When $S$ is a field, the norm on scalars $\norm{z}=\prod_{\gamma \in G}\varphi(\gamma)(z)$ establishes a direct connection between higher-order CPM constructions and Galois theory. Specifically, if $K$ is the sub-field of $S$ which is fixed by all field automorphisms $\varphi(\gamma)$, then the norm $\norm{z}$ is exactly the field norm for the finite Galois extension $S/K$.}

By using the traces $\trace{n}$, it is not hard to show that the scalars in $\PhiXiCPMCategory{\Phi}{\Xi}{\RMatCategory{S}}$ are exactly the closure under addition of the subset $\{ \norm{x} | x \in S \} \subseteq S$:
\[
	\sum_{j=1}^{n} \norm{x_j} 
	= 
	\trace{n} \circ \PhiFoldingFunctor{\Phi}{\sum_{j=1}^n x_j \ket{j}} 
\]
As a consequence, the scalars of $\PhiXiCPMCategory{\Phi}{\Xi}{\RMatCategory{S}}$ form a sub-semiring $R$ of $S$. The same trick can be used to show that arbitrary morphisms are closed under addition, so that $\PhiXiCPMCategory{\Phi}{\Xi}{\RMatCategory{S}}$ is enriched in $R$-modules.

\newpage
We now show that a categorical $R$-probabilistic theory \cite{gogioso2017categorical} can be constructed inside the Karoubi envelope for $\PhiXiCPMCategory{\Phi}{\Xi}{\RMatCategory{S}}$: this allows one to study the natural interface between ``quantum'' systems in $\PhiXiCPMCategory{\Phi}{\Xi}{\RMatCategory{S}}$ and ``classical'' systems with a notion of non-determinism defined by the semiring $R$. This includes natural definitions of tests and controlled preparation, as well as the possibility of studying non-locality using no-signalling empirical models from the sheaf-theoretic framework of \cite{abramsky2011sheaf}. When $\varphi(1):= z \mapsto z^\ast$, the construction presented here reduces to the one originally detailed in \cite{gogioso2017fantastic}.
\vspace{-0.5mm}

On every object $n$ of $\RMatCategory{S}$, we can consider the copy map $\!\hbox{}\!\!_{n}:= \sum_{j=1}^n (\ket{j} \otimes \ket{j})\circ\bra{j}$ for the special commutative $\dagger$-Frobenius algebra $\hbox{}\!\!_n$ associated with the standard orthonormal basis.  Combining these maps with the environment structure, we can construct idempotent \emph{decoherence maps} as follows:
\[
	\decoh{\hbox{}\!\!_n}
	:=
	(\id{n} \otimes \trace{n})
	\circ
	\PhiFoldingFunctor{\Phi}{\!\hbox{}\!\!_{n}}
	=
	\sum_{j=1}^n \PhiFoldingFunctor{\Phi}{\ket{j}\bra{j}}
\]

\newcounter{lemma_SMatKaroubi_c}
\setcounter{lemma_SMatKaroubi_c}{\value{theorem_c}}
\begin{lemma}
The full subcategory of the Karoubi envelope for $\PhiXiCPMCategory{\Phi}{\Xi}{\RMatCategory{S}}$ spanned by objects in the form $(n,\decoh{\hbox{}\!\!_n})$ is isomorphic to $\RMatCategory{R}$, i.e. it behaves as the category of $R$-probabilistic classical systems. As a consequence, the full sub-SMC of the Karoubi envelope spanned by objects in the form $(n,\id{n})$---the ``quantum'' systems---and the objects in the form $(n,\decoh{\hbox{}\!\!_n})$---the ``classical'' systems---defines a categorical $R$-probabilistic theory in the sense of \cite{gogioso2017categorical}.
\end{lemma}

In the categorical $R$-probabilistic theory defined above, a generic normalised quantum-to-classical process $A:(n,\id{n}) \rightarrow (m,\decoh{\hbox{}\!\!_m})$---the generalisation of a POVM, if you will---is defined by a classically-indexed family $(A_i)_{i=1,...,m}$ of effects $A_i:n\rightarrow 1$ in $\PhiXiCPMCategory{\Phi}{\Xi}{\RMatCategory{S}}$ such that $\sum_{i=1}^m A_i = \trace{n}$. In the sharp case where $A_i = \PhiFoldingFunctor{\Phi}{\bra{a_i}}$ for some orthonormal family $\ket{a_i}$ of states in $n$, the Born rule determining the $R$-valued probability of outcome $i$ on pure state $\ket{\psi}$ takes the following higher-order form:
\[
	\mathbb{P}(i | \psi) 
	=
	A_i \circ \PhiFoldingFunctor{\Phi}{\ket{\psi}}
	=
	\PhiFoldingFunctor{\Phi}{\braket{a_i}{\psi}}
	=
	\norm{\braket{a_i}{\psi}}
\]
The non-quadratic nature of the Born rule suggests that categorical $R$-probabilistic theories obtained using the higher-order CPM construction might display higher-order interference phenomena, and this indeed turns out to be the case: in recent work by \cite{gogioso2018hypercubes}, a variation on the double-mixing construction was used to construct a probabilistic theory of ``density hypercubes'', displaying interference of order up to four and possessing hyper-decoherence maps.

\section{Conclusions and future work}

We have shown that the CPM construction can be generalised from the traditional $\integersMod{2}$ conjugating symmetry to arbitrary finite abelian group symmetries, in a completely functorial way. We have provided a categorical description of the closure of our higher-order CPM constructions under iteration, characterising it via the definition of an Eilenberg-Moore algebra for a suitable monad.

We have constructed a broad family of semiring-based examples generalising the traditional second-order ones, and we have proved that they can all be studied using the operational framework of categorical probabilistic theories. As shown by recent work on higher-order interference and hyper-decoherence, these new examples have real potential to provide a wealth of previously unknown exotic models, and we look forward to studying them in detail as part of future work.

Finally, this work defines generalised CPM constructions in such a way that they are embedded into the original SMC, for mathematical ease of definitions and proofs. This leads to the technical requirement that the folding functor be injective on objects, which could be avoided by adopting a construction analogous to \cite{selinger2007dagger}. Such a modification is conceptually simple but technically convoluted, and is left to future work.

\bibliographystyle{eptcs}
\nocite{*}
\bibliography{bibliography}

\newpage
\appendix 

\section{Proofs}

\setcounter{theorem_c}{\value{lemma_phifolded_c}}
\begin{lemma}
The $\Phi$-folded category is a symmetric monoidal category $(\PhiFoldedCategory{\Phi}{\CategoryC},\boxtimes,\PhiFoldingFunctor{\Phi}{I})$, with tensor product $\boxtimes$ defined as follows:
\[
\begin{array}{rcl}
	\PhiFoldingFunctor{\Phi}{A} \boxtimes \PhiFoldingFunctor{\Phi}{B}
	& := &
	\PhiFoldingFunctor{\Phi}{A \otimes B}
	\\
	\PhiFoldingFunctor{\Phi}{f} \boxtimes \PhiFoldingFunctor{\Phi}{g}
	& := &
	\PhiFoldingFunctor{\Phi}{f \otimes g}
	\\
\end{array}
\]
The folding functor $\CategoryC \rightarrow \PhiFoldedCategory{\Phi}{\CategoryC}$ is a monoidal functor under this choice of monoidal structure.
\end{lemma}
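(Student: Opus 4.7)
The plan is to transfer the symmetric monoidal structure of $\CategoryC$ across the folding functor $\PhiFoldingFunctorSym{\Phi}$, and then check that everything lands in a well-defined way on the image. Well-definedness of $\boxtimes$ on objects is immediate from the standing assumption that $\PhiFoldingFunctorSym{\Phi}$ is injective on objects: each object of $\PhiFoldedCategory{\Phi}{\CategoryC}$ has a unique preimage $A\in\obj{\CategoryC}$, so the rule $\PhiFoldingFunctor{\Phi}{A} \boxtimes \PhiFoldingFunctor{\Phi}{B} := \PhiFoldingFunctor{\Phi}{A \otimes B}$ depends only on the combined objects.

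Well-definedness on morphisms is the delicate point, since the folding functor need not be faithful a priori. I would address it by introducing, for each pair $A,B$, the unique permutation $\pi_{A,B}:\PhiFoldingFunctor{\Phi}{A} \otimes \PhiFoldingFunctor{\Phi}{B} \rightarrow \PhiFoldingFunctor{\Phi}{A \otimes B}$ assembled from symmetries of $\CategoryC$ that reorders the $G$-indexed tensor factors appropriately, and then establishing the identity
\[
	\PhiFoldingFunctor{\Phi}{f \otimes g}
	=
	\pi_{C,D} \circ (\PhiFoldingFunctor{\Phi}{f} \otimes \PhiFoldingFunctor{\Phi}{g}) \circ \pi_{A,B}^{-1}
\]
for all $f:A \rightarrow C$ and $g:B \rightarrow D$ in $\CategoryC$. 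This identity follows by a direct wire-chasing argument: monoidality of each $\Phi(\gamma)$ gives $\Phi(\gamma)[f \otimes g] = \Phi(\gamma)[f] \otimes \Phi(\gamma)[g]$, and naturality of the symmetry $\sigma$ of $\CategoryC$ lets one slide $\pi_{A,B}^{-1}$ past $\bigotimes_{\gamma} \Phi(\gamma)[f] \otimes \bigotimes_{\gamma} \Phi(\gamma)[g]$ to match the reordering into $\bigotimes_\gamma (\Phi(\gamma)[f] \otimes \Phi(\gamma)[g])$. The right-hand side only involves the folded morphisms and the permutations $\pi$ attached to $A,B,C,D$, and these objects are determined by their foldings via injectivity, so the rule $\PhiFoldingFunctor{\Phi}{f} \boxtimes \PhiFoldingFunctor{\Phi}{g} := \PhiFoldingFunctor{\Phi}{f \otimes g}$ genuinely depends only on the folded inputs.

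Bifunctoriality of $\boxtimes$ is then inherited from the bifunctoriality of $\otimes$ in $\CategoryC$ and the functoriality of $\PhiFoldingFunctorSym{\Phi}$: identities are preserved because $\PhiFoldingFunctor{\Phi}{\id{A \otimes B}} = \id{\PhiFoldingFunctor{\Phi}{A \otimes B}}$, and interchange for $\boxtimes$ reduces to interchange for $\otimes$ inside each $\Phi(\gamma)$. For the coherence data in $\PhiFoldedCategory{\Phi}{\CategoryC}$ I would take the folded images of the $\CategoryC$-counterparts, for instance setting the braiding to be $\PhiFoldingFunctor{\Phi}{\sigma_{A,B}}$ and analogously for associator and unitors; naturality squares and the pentagon, triangle and hexagon axioms then transfer one-to-one, because each is a commuting diagram in $\CategoryC$ whose image under the functor $\PhiFoldingFunctorSym{\Phi}$ is the required commuting diagram in $\PhiFoldedCategory{\Phi}{\CategoryC}$. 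Under the strict-monoidal convention of the paper, associator and unitors in fact collapse to identities on both sides.

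The final clause of the lemma, that $\PhiFoldingFunctorSym{\Phi}:\CategoryC \rightarrow \PhiFoldedCategory{\Phi}{\CategoryC}$ is monoidal under $\boxtimes$, is immediate from the two defining equations of $\boxtimes$, which express strict monoidality of the folding functor on the nose. The main obstacle is the $\pi$-permutation identity underpinning well-definedness on morphisms; once that is in hand, the remainder is a transparent inheritance of structure through $\PhiFoldingFunctorSym{\Phi}$.
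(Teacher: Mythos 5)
Your proof is correct, and it is consistent with the paper, which dismisses this lemma as ``entirely straightforward'' and gives no details. You correctly isolate the one genuinely non-trivial point---well-definedness of $\boxtimes$ on morphisms when $\PhiFoldingFunctorSym{\Phi}$ need not be faithful---and resolve it via the conjugation identity $\PhiFoldingFunctor{\Phi}{f\otimes g}=\pi_{C,D}\circ(\PhiFoldingFunctor{\Phi}{f}\otimes\PhiFoldingFunctor{\Phi}{g})\circ\pi_{A,B}^{-1}$, which is exactly the identity the paper itself records (without proof) in the following subsection; the rest is the routine transfer of structure you describe.
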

\begin{proof}
The proof is entirely straightforward.
\end{proof}

\setcounter{theorem_c}{\value{lemma_phixicpm_c}}
\begin{lemma}
We can extend the tensor product $\boxtimes$ of $\PhiFoldedCategory{\Phi}{\CategoryC}$ as follows to turn $\PhiXiCPMCategory{\Phi}{\Xi}{\CategoryC}$ into a symmetric monoidal category, having $\PhiFoldedCategory{\Phi}{\CategoryC}$ as a monoidal subcategory:
\[
	F \boxtimes G := \pi_{C,D} \circ (F \otimes G) \circ \pi_{A,B}^{-1}
\]
where $F:\PhiFoldingFunctor{\Phi}{A} \rightarrow \PhiFoldingFunctor{\Phi}{C}$ and $G: \PhiFoldingFunctor{\Phi}{B} \rightarrow \PhiFoldingFunctor{\Phi}{D}$ are generic morphisms in $\PhiXiCPMCategory{\Phi}{\Xi}{\CategoryC}$.
\end{lemma}
\begin{proof}
note that $\PhiXiCPMCategory{\Phi}{\Xi}{\CategoryC}$ is defined by composing all maps from $\PhiFoldedCategory{\Phi}{\CategoryC}$, where $\boxtimes$ is already well-defined, with all maps in the following form, for all effects $\xi_A \in \Xi_A$ in the multi-environment structure:
\[
	(\xi_A \otimes \id{\PhiFoldingFunctor{\Phi}{B}}) \circ \pi_{A,B}^{-1}
\]
Note that by the way we have extended the definition of $\boxtimes$ from $\PhiFoldedCategory{\Phi}{\CategoryC}$ to $\PhiXiCPMCategory{\Phi}{\Xi}{\CategoryC}$, the maps above can be equivalently written in the following form:
\[
	\xi_{A} \boxtimes \id{\PhiFoldingFunctor{\Phi}{B}}
\]
By picking a suitable composite system for $B$ and using the symmetry isomorphisms from $\PhiFoldedCategory{\Phi}{\CategoryC}$, we can place the effect $\xi_A$ on any output of type $\PhiFoldingFunctor{\Phi}{A}$ of any map in $\PhiFoldedCategory{\Phi}{\CategoryC}$. It is therefore enough to show that things work out in the case of $\xi_A \boxtimes \xi_B$, for any choice of $\xi_A \in \Xi_A$ and $\xi_B \in \Xi_B$. But this follows from condition (i) of the definition of multi-environment structure, since the $\boxtimes$ product of two effects in the multi-environment structure is again in the multi-environment structure:
\[
	\xi_A \boxtimes \xi_B = (\xi_A \otimes \xi_B) \circ \pi_{A,B}^{-1} \in \Xi_{A \otimes B}
\] 
\end{proof}
\begin{remark*}
One may wonder why we didn't simply define $\PhiXiCPMCategory{\Phi}{\Xi}{\CategoryC}$ as generated by $\PhiFoldedCategory{\Phi}{\CategoryC}$, $\boxtimes$ and the effects in $\Xi$. The reasons for this is technical: the product $\boxtimes$ is not defined on the entirety of $\CategoryC$, so such a definition would be mathematically imprecise. The reader should however feel free to reason about $\PhiXiCPMCategory{\Phi}{\Xi}{\CategoryC}$ ``as if'' that was its definition. 
\end{remark*}
\begin{remark*}
One may also wonder why we had to consider maps in the form $\xi_{A} \boxtimes \id{\PhiFoldingFunctor{\Phi}{B}}$, rather than going directly for $\xi_A \boxtimes \xi_B$. This is because $\PhiXiCPMCategory{\Phi}{\Xi}{\CategoryC}$ had to be defined---for the reasons explained in the previous Remark---as a sub-\underline{category} of $\CategoryC$ spanned by certain maps, and not as a sub-\underline{SMC} of $\CategoryC$. This means that the existence of maps in the form $\xi_{A} \boxtimes \id{\PhiFoldingFunctor{\Phi}{B}}$ would not automatically follow from the existence of effects in the form $\xi_A \boxtimes \xi_B$.
\end{remark*}

\setcounter{theorem_c}{\value{lemma_precpmuniverse_c}}
\begin{lemma}
There is a faithful and surjective functor $\UnderlyingSMCFunctor{\emptyArg}{\PreCPMFunctor{\Theta}\rightarrow\Theta}:\PreCPMFunctor{\Theta} \rightarrow \Theta$ which sends $(\CategoryC,\Phi,\Xi)$ to $\CategoryC$ and is the identity on morphisms. The category $\PreCPMFunctor{\Theta}$ is an SMC-universe with underlying SMC functor $\UnderlyingSMCFunctor{\emptyArg}{\PreCPMFunctor{\Theta}}$ defined by $\UnderlyingSMCFunctor{\emptyArg}{\PreCPMFunctor{\Theta}}:= \UnderlyingSMCFunctor{\UnderlyingSMCFunctor{\emptyArg}{\PreCPMFunctor{\Theta}\rightarrow \Theta}}{\Theta}$.
\end{lemma}
\begin{proof}
The proof is entirely straightforward.
\end{proof}

\setcounter{theorem_c}{\value{lemma_higherordercpmfunctor_c}}
\begin{lemma}
	Let $\Theta$ be a sub-category of $\SymMonCatUniverse$, seen as a SMC-universe where $\UnderlyingSMCFunctor{\emptyArg}{\Theta}$ is the sub-category inclusion (so that $\UnderlyingSMC{\Theta} = \Theta$). Then the higher-order CPM construction can be used to define a morphism of SMC-universes $(\CPMFunctorSym,n^{\CPMFunctorSym}):\PreCPMFunctor{\Theta} \rightarrow \Theta$ as follows:
	\begin{itemize}
	 	\item the functor $\CPMFunctorSym: \PreCPMFunctor{\Theta} \rightarrow \Theta$ is the one sending an object $(\CategoryC,\Phi,\Xi)$ of $\PreCPMFunctor{\Theta}$ to the object $\PhiXiCPMCategory{\Phi}{\Xi}{\CategoryC}$ of $\Theta$, and acting as the identity on morphisms;
	 	\item the natural transformation $n^{\CPMFunctorSym}$ is given by the $\Phi$-folding functor $\PhiFoldingFunctorSym{\Phi}:\CategoryC \rightarrow \PhiXiCPMCategory{\Phi}{\Xi}{\CategoryC}$.
	 \end{itemize} 
\end{lemma}
\begin{proof}
First we show that the functor $\CPMFunctorSym: \PreCPMFunctor{\Theta} \rightarrow \Theta$ is well-defined. Condition (ii) in the definition of $\PreCPMFunctor{\Theta}$ requires the following things for a morphism $F: (\CategoryC,\Phi,\Xi) \rightarrow (\CategoryC',\Phi',\Xi')$ to be in $\PreCPMFunctor{\Theta}$:
\begin{itemize}
	\item $F: \CategoryC \rightarrow \CategoryC'$ is in $\Theta$, and in particular it is monoidal;
	\item $\Phi$ and $\Phi'$ are actions of the same $G$, and the functor $F$ is $G$-equivariant: $\Phi'(\gamma) \circ F = F \circ \Phi(\gamma)$;
	\item $F$ respects the multi-environment structure: $\{F(\xi_A) | \xi_A \in \Xi_A \} \subseteq \Xi'_{F(A)}$.
\end{itemize}
By monoidality and $G$-equivariance, we must have that $F(\PhiFoldingFunctor{\Phi}{A}) = \PhiFoldingFunctor{\Phi'}{F(A)}$ on objects and $F(\PhiFoldingFunctor{\Phi}{f}) = \PhiFoldingFunctor{\Phi'}{F(f)}$ on morphisms, so $F$ restricts to a well-defined monoidal functor $F:\PhiFoldedCategory{\Phi}{\CategoryC}\rightarrow \PhiFoldedCategory{\Phi'}{\CategoryC'}$. Respect of the multi-environment structure implies that $F$ further extends to a well-defined monoidal functor $F: \PhiXiCPMCategory{\Phi}{\Xi}{\CategoryC}\rightarrow\PhiXiCPMCategory{\Phi'}{\Xi'}{\CategoryC'}$.

We now show that the natural transformation $n^{\CPMFunctorSym}$ is well-defined. We know that $\PhiFoldingFunctor{\Phi}{\CategoryC}: \CategoryC \rightarrow \PhiFoldedCategory{\Phi}{\CategoryC}$ is a morphism in $\Theta$, as part of condition (i) in the definition of $\PreCPMFunctor{\Theta}$, so all we need to show is that the functors $\PhiFoldingFunctor{\Phi}{\CategoryC}: \CategoryC \rightarrow \PhiFoldedCategory{\Phi}{\CategoryC}$ and $\PhiFoldingFunctor{\Phi'}{\CategoryC'}: \CategoryC' \rightarrow \PhiFoldedCategory{\Phi'}{\CategoryC'}$ satisfy:
\[
	\PhiFoldingFunctor{\Phi'}{\CategoryC'} \circ F
	=
	F \circ \PhiFoldingFunctor{\Phi}{\CategoryC}
\]
for every morphism $F:(\CategoryC,\Phi,\Xi) \rightarrow (\CategoryC',\Phi',\Xi')$ in $\PreCPMFunctor{\Theta}$. Note that on the LHS of the equation we are using $F:\CategoryC \rightarrow \CategoryC'$, while on the RHS of the equation we are using $F: \PhiXiCPMCategory{\Phi}{\Xi}{\CategoryC}\rightarrow\PhiXiCPMCategory{\Phi'}{\Xi'}{\CategoryC'}$. The equation above then holds as a consequence of monoidality, $G$-equivariance and respect of multi-environment structure on part of $F$.
\end{proof}

\setcounter{theorem_c}{\value{theorem_cpmalgebra_c}}
\begin{theorem}
The map $\PreCPMFunctor{\emptyArg}$ can be extended to an endofunctor of $\SMCUnivsCategory$ by defining its action on morphisms $(\zeta,n^{\zeta}):\Theta \rightarrow \Theta'$ of $\SMCUnivsCategory$ as follows:
\begin{itemize}
	\item the functor $\PreCPMFunctor{\Theta} \rightarrow \PreCPMFunctor{\Theta'}$ is given by:
	\[
	\begin{array}{rcl}
		(\CategoryC,\Phi,\Xi) &\mapsto& (\zeta(\CategoryC),\zeta(\Phi),n^{\zeta}(\Xi)) \\
		F &\mapsto& \zeta(F)
	\end{array}
	\]
	where $\zeta(\Phi)$ is the group homomorphism $\gamma \mapsto \zeta(\Phi(\gamma))$, and we define the multi-environment structure $n^{\zeta}(\Xi)_{A}:=\{ n^{\zeta}_{\CategoryC}(\xi_A) | \xi_A \in \Xi_A \}$;
	\item the natural transformation $\UnderlyingSMCFunctor{\CategoryC}{\Theta} \rightarrow \UnderlyingSMCFunctor{\zeta(\CategoryC)}{\Theta'}$ is given by $n^{\zeta}_{\CategoryC}$.
\end{itemize}
The endofunctor $\PreCPMFunctor{\emptyArg}$ is a monad with the following multiplication $\mu_{\Theta}:\PreCPMFunctor{\PreCPMFunctor{\Theta}} \rightarrow \PreCPMFunctor{\Theta}$ and unit $\eta_{\Theta}: \Theta \rightarrow \PreCPMFunctor{\Theta}$:
\begin{itemize}
	\item the functor $\PreCPMFunctor{\PreCPMFunctor{\Theta}} \rightarrow \PreCPMFunctor{\Theta}$ for the multiplication $\mu$ is given by:\[
	\begin{array}{rcl}
		((\CategoryC,\Phi,\Xi),\Phi',\Xi') &\mapsto& (\CategoryC,\Phi \odot \Phi', \Xi \odot \Xi') \\
		F &\mapsto& F
	\end{array}
	\]
	\item the functor $\Theta \rightarrow \PreCPMFunctor{\Theta}$ for the unit $\eta$ is given by:
	\[
	\begin{array}{rcl}
		\CategoryC &\mapsto& (\CategoryC,1,1) \\
		F &\mapsto& F
	\end{array}
	\]
	\item the natural transformations for both the multiplication $\mu$ and the unit $\eta$ are identity functors:
	\[
		\id{\UnderlyingSMCFunctor{\CategoryC}{\Theta}}: \UnderlyingSMCFunctor{\CategoryC}{\Theta} \rightarrow \UnderlyingSMCFunctor{\CategoryC}{\Theta}
	\]
\end{itemize}
If $\Theta$ is a sub-category of $\SymMonCatUniverse$, then $\CPMFunctorSym: \PreCPMFunctor{\Theta} \rightarrow \Theta$ is an Eilenberg-Moore algebra for the monad $\PreCPMFunctor{\emptyArg}$.
\end{theorem}
\begin{proof}
There are a number of claims to check: we need to show that the action on morphisms is well-defined, we need to show that the monad laws hold, and we need to show that the algebra laws hold.

Given a morphism $(\zeta,n^\zeta):\Theta \rightarrow \Theta'$ of $\SMCUnivsCategory$, we need to check that the following gives a well-defined functor $\PreCPMFunctor{\Theta} \rightarrow \PreCPMFunctor{\Theta'}$:
\[
\begin{array}{rcl}
	(\CategoryC,\Phi,\Xi) & \mapsto & (\zeta(\CategoryC),\zeta(\Phi),n^\zeta(\Xi)) \\
	F & \mapsto & \zeta(F) 
\end{array}
\] 
Firstly, we need to check that $(\zeta(\CategoryC),\zeta(\Phi),n^\zeta(\Xi))$ is an object of $\PreCPMFunctor{\Theta'}$ whenever $(\CategoryC,\Phi,\Xi)$ is an object of $\PreCPMFunctor{\Theta}$. Because $\zeta: \Theta \rightarrow \Theta'$ is a functor, $\zeta(\CategoryC)$ is necessarily in $\Theta'$ and the morphisms $\zeta(\Phi(\gamma)): \zeta(\CategoryC) \rightarrow \zeta(\CategoryC)$ define an action of $G$ on $\zeta(\CategoryC)$ in $\Theta'$. Monoidality of $n^{\zeta}_{\CategoryC}$ furthermore ensures that conditions (i) and (ii) in the definition of multi-environment structure are satisfied for $n^\zeta(\Xi)$, while condition (iii) follows from naturality:
\[
	\zeta(\Phi(\gamma))[n^\zeta_{\CategoryC}(\xi_A)]
	=
	n^\zeta_{\CategoryC}\big(\Phi(\gamma)[\xi_A]\big)
	=
	n^\zeta_{\CategoryC}\big(\xi_{A} \circ \tau_A(\gamma)^{-1}\big)
	=
	n^\zeta_{\CategoryC}(\xi_A)\circ \tau_{n^\zeta_{\CategoryC}(A)}(\gamma)^{-1}
\]
Secondly, we need to check that $F \mapsto \zeta(F)$ provides a well-defined action of the functor $\PreCPMFunctor{\Theta} \rightarrow \PreCPMFunctor{\Theta'}$ on morphisms $F:(\CategoryC,\Phi,\Xi) \rightarrow (\CategoryC',\Phi',\Xi')$. The $G$-equivariance requirement on $\zeta(F)$ follows from functoriality:
\[
	\zeta(\Phi'(\gamma)) \circ \zeta(F)
	=
	\zeta(\Phi'(\gamma)\circ F)
	=
	\zeta(F \circ \Phi(\gamma))
	=
	\zeta(F) \circ \zeta(\Phi(\gamma))
\]
Respect of the multi-environment structure follows form naturality of $n^\zeta$:
\[
	\UnderlyingSMCFunctor{\zeta(F)}{\Theta'}\big(n^\zeta_{\CategoryC}(\xi_A)\big)
	=
	n^\zeta_{\CategoryC'}\big(\UnderlyingSMCFunctor{F}{\Theta}(\xi_A)\big)
\]
Finally, we need to check that $n^\zeta_{\CategoryC}$ provides a suitable natural transformation. But this is obvious.

Having shown that $\PreCPMFunctor{\emptyArg}$ is an endofunctor, we move on to establishing that axioms for a monad are satisfied by the given multiplication $\mu_{\Theta}:\PreCPMFunctor{\PreCPMFunctor{\Theta}} \rightarrow \PreCPMFunctor{\Theta}$ and unit $\eta_{\Theta}: \Theta \rightarrow \PreCPMFunctor{\Theta}$. In fact, we don't have to do any work here: all necessary commuting diagrams follows from associativity and unitality of the monoids $(\odot,1)$ formed by the group actions and the multi-environment structures. The only thing to check is that the product $\Phi \odot \Phi'$ is well-defined, i.e. that the actions $\Phi$ and $\Phi'$ commute: this is an immediate consequence of the fact that the automorphisms $\Phi'(\gamma):(\CategoryC,\Phi,\Xi) \rightarrow (\CategoryC,\Phi,\Xi)$ live in $\PreCPMFunctor{\Theta}$, and hence are $G$-equivariant with respect to $\Phi$.

The only thing remaining to be shown is that $\CPMFunctorSym: \PreCPMFunctor{\Theta} \rightarrow \Theta$ is an Eilenberg-Moore algebra for the monad $\PreCPMFunctor{\Theta}$, whenever $\Theta$ is a sub-category of $\SymMonCatUniverse$: the actual definition of the monoid operations $(\odot,1)$ on group actions and multi-environment structures comes into play here.

We begin by checking the triangle law for algebras. Recall that the unit for group actions is defined by the trivial action $1:\integersMod{1} \rightarrow \Automs{\Theta}{\CategoryC}$ given by the identity automorphism, and that the unit for multi- environment structures is defined by:
\[
	1_A 
	:= 
	\begin{cases}
		\{1\} &\text{ if } A \isom I\\
		\emptyset &\text{ otherwise}
	\end{cases}
\]
The category $\PhiXiCPMCategory{1}{1}{\CategoryC} = \CPMFunctor{(\CategoryC,1,1)}$ is simply $\CategoryC$, and we have $\CPMFunctor{F} = F$ on functors $F:\CategoryC \rightarrow \CategoryC'$, so the triangle law $\CPMFunctorSym \circ \eta_{\Theta} = \id{\Theta}$ is satisfied as desired.

Recall now that the product $\Phi \odot \Phi': (G \otimes G') \rightarrow \Automs{\Theta}{\CategoryC}$ between commuting actions $\Phi: G \rightarrow \Automs{\Theta}{\CategoryC}$ and $\Phi':G' \rightarrow \Automs{\Theta}{\CategoryC}$ is defined by:
\[
	(\Phi \odot \Phi')(\gamma,\gamma') := \Phi'(\gamma') \Phi(\gamma)
\]
Also recall that the product $\Xi \odot \Xi'$ between multi-environment structures is defined by:
\[
	\Xi \odot \Xi' := \PhiFoldingFunctor{\Phi'}{\Xi} \bigvee \PhiFoldingFunctor{\Phi}{\Xi'}
\]
where we have taken $\PhiFoldingFunctor{\Phi'}{\Xi}_{A} := \Big\{ \PhiFoldingFunctor{\Phi'}{\xi_{A}} \Big| \xi_A \in \Xi_A \Big\} $ and $\PhiFoldingFunctor{\Phi}{\Xi'}_{A} := \Big\{ \PhiFoldingFunctor{\Phi}{\xi'_{A}} \Big| \xi'_A \in \Xi'_A \Big\}$. We make the following observations:
\begin{itemize}
	\item by construction, the product $\Phi \odot \Phi'$ yields the same folding as $\Phi$ followed by $\Phi'$:
	\[
		\bigotimes\limits_{(\gamma,\gamma') \in G \times G'}
		\Phi'(\gamma') \Phi(\gamma) [\emptyArg]
		=
		\bigotimes\limits_{\gamma' \in G'} \Phi'(\gamma')
		\Bigg[
		\bigotimes\limits_{\gamma \in G} \Phi(\gamma)[\emptyArg]
		\Bigg]
	\]
	\item by construction, the product $\Xi \odot \Xi'$ is generated by the following effects:
	\[
		\bigotimes\limits_{\gamma' \in G'} \Phi'(\gamma')[\xi]
		\hspace{3cm}
		\bigotimes\limits_{\gamma \in G} \Phi(\gamma)[\xi']
	\]
	The former are the effects $\xi \in \Xi$ after $\Phi'$-folding, while the latter are the effects $\xi' \in \Xi'$ to which the natural transformation $n^{\CPMFunctorSym}_{\CategoryC}=\PhiFoldingFunctorSym{\Phi}:\CategoryC \rightarrow \PhiXiCPMCategory{\Phi}{\Xi}{\CategoryC}$ has been applied as part of the action of the monad $\PreCPMFunctor{\emptyArg}$
\end{itemize}
This means that the iterated construction
\[
	\PhiXiCPMCategory{\Phi'}{\Xi'}{\PhiXiCPMCategory{\Phi}{\Xi}{\CategoryC}}
	=
	\CPMFunctorSym 
	\circ
	\PreCPMFunctor{\CPMFunctorSym}
	\bigg[
	(\CategoryC, \Phi \odot \Phi', \Xi \odot \Xi')
	\bigg]
\]
results in the same exact sub-category of $\CategoryC$ as the one-shot construction
\[
	\PhiXiCPMCategory{\Phi\odot \Phi'}{\Xi \odot \Xi'}{\CategoryC}
	=
	\CPMFunctorSym \circ \mu_{\Theta}
	\bigg[
		\big((\CategoryC,\Phi,\Xi),\Phi',\Xi'\big)
	\bigg]
\]
Similar considerations can be made on the action of the $\CPMFunctorSym$ construction on morphisms, showing that the square law $\CPMFunctorSym \circ \PreCPMFunctor{\CPMFunctorSym} = \CPMFunctorSym \circ \mu_{\Theta}$ is satisfied, as desired.
\end{proof}

\setcounter{theorem_c}{\value{lemma_SMatKaroubi_c}}
\begin{lemma}
The full subcategory of the Karoubi envelope for $\PhiXiCPMCategory{\Phi}{\Xi}{\RMatCategory{S}}$ spanned by objects in the form $(n,\decoh{\hbox{}\!\!_n})$ is isomorphic to $\RMatCategory{R}$, i.e. it behaves as the category of $R$-probabilistic classical systems. As a consequence, the full sub-SMC of the Karoubi envelope spanned by objects in the form $(n,\id{n})$---the ``quantum'' systems---and the objects in the form $(n,\decoh{\hbox{}\!\!_n})$---the ``classical'' systems---defines a categorical $R$-probabilistic theory in the sense of \cite{gogioso2017categorical}.
\end{lemma}
\begin{proof}
The proof is entirely straightforward, analogous to the proof given in \cite{gogioso2017fantastic} for the second-order case of ``conjugation'' in involutive semirings. 
\end{proof}

\end{document}